\chardef\forshowkeys=0
\chardef\refcheck=0
\chardef\showllabel=0
\chardef\sketches=0
\chardef\showcolors=0
\def\llabel#1{\marginnote{\color{colorcccc}\rm\small(#1)}[-0.0cm]\notag}
   \def\llabel{\label}
\def\llabel#1{\notag}
\newtheorem{Theorem}{Theorem}[section]
\newtheorem{Lemma}[Theorem]{Lemma}
\theoremstyle{definition}
\def\ccc{C_m}
\def\constanta{A_m}
\def\constantn{N_m}
\def\bhh{h}
\def\bnn{n}
\def\bdd{d}
\newcommand{\diam}{\operatorname{diam}}
\newcommand{\mywidth}{\operatorname{width}}
\def\inon#1{\hbox{\ \ \ \ \ \ \ }\hbox{#1}}                
\def\inin#1{\inon{in~$#1$}}
\def\RR{{\mathbb R}}
\def\comma{ {\rm ,\quad{}} }            
\def\dive{\mathop{\rm div}\nolimits}    
\def\indeq{\qquad{}}                     
\def\RR{\mathbb R}
\def\eps{\epsilon}
\def\tilde{\widetilde}
\def\div{\mathop{\rm div}\nolimits}
\def\indeq{\quad{}}
\definecolor{colorcccc}{rgb}{0.7,0.7,0.7}
\def\colb{\color{black}}
\definecolor{colorpppp}{rgb}{0.6,0.0,0.1}
\definecolor{colorgggg}{rgb}{.0,0.4,0.0}
\definecolor{colorhhhh}{rgb}{0,0.6,0.2}
\definecolor{colorgray}{rgb}{0.8,0.8,0.8}
\definecolor{coloroftheorems}{rgb}{0.45,0.0,0.0}
\definecolor{colorigor}{rgb}{1, 0.2, 0.8}
\definecolor{amethyst}{rgb}{0.6, 0.4, 0.8}
\def\cole{\color{coloroftheorems}}
\definecolor{colororange}{rgb}{0.8,0.2,0}
\definecolor{colorpurple}{rgb}{0.6,0.0,0.6}
\definecolor{colorcccc}{rgb}{0,0,0}
\def\colb{\color{black}}
\definecolor{colorpppp}{rgb}{0,0,0}
\definecolor{colorgggg}{rgb}{0,0,0}
\definecolor{colorhhhh}{rgb}{0,0,0}
\definecolor{colorgray}{rgb}{0,0,0}
\definecolor{coloroftheorems}{rgb}{0,0,0}
\definecolor{colorigor}{rgb}{0,0,0}
\definecolor{amethyst}{rgb}{0,0,0}
\def\cole{\color{coloroftheorems}}
\definecolor{colororange}{rgb}{0.8,0.2,0}
\definecolor{colorpurple}{rgb}{0.6,0.0,0.6}
\def\bega{\begin{aligned}}
\def\enda{\end{aligned}}
\def\bcase{\begin{cases}}
\def\ecase{\end{cases}}
\def\bmx{\begin{bmatrix}}
\def\emx{\end{bmatrix}}
\def\NN{\mathbb{N}}
\numberwithin{equation}{section}
\begin{document}

\baselineskip=12.6pt

$\,$
\vskip1.2truecm
\title[Hausdorff measure of Nodal set]{Nodal set for the Schr\"odinger equation under a\\ local growth condition}

\author[I.~Kukavica]{Igor Kukavica}
\address{Department of Mathematics, University of Southern California, Los Angeles, CA 90089}
\email{kukavica@usc.edu}

\author[L.~Li]{Linfeng Li}
\address{Department of Mathematics, University of California Los Angeles, Los Angeles, CA 90095}
\email{lli265@math.ucla.edu}

\begin{abstract}
We address the upper bound on the size of the nodal set
for a solution $w$ of
the Schr\"odinger equation
$\Delta w= W\cdot \nabla w+V  w$ in an open set in $\mathbb{R}^n$,
 where the coefficients belong to certain Sobolev spaces.
 Assuming a local doubling condition for the solution $w$, we establish an upper bound on the $(n-1)$-dimensional Hausdorff measure of the nodal set,
with the bound depending algebraically on the Sobolev norms of $W$ and~$V$.
	\hfill \today
\end{abstract}

\maketitle

\date{}

\section{Introduction}
In recent years, quantitative bounds on the size of nodal sets for solutions of elliptic and parabolic equations have attracted considerable attention, both for their geometric interest and for applications to inverse problems and control theory. 
In this paper, we focus on the Schr\"odinger equation
\begin{align}
	\Delta w
	= 
	W\cdot\nabla w 
	+
	V w 
	\label{EQ01}
\end{align}
in $\Omega\subset \mathbb{R}^n$, where $W$ and $V$ are given.
Under the assumption that a solution $w$ satisfies a natural $L^2$ growth condition, we derive an algebraic upper bound on the $(n-1)$-dimensional Hausdorff measure of the nodal set.
Notably, our estimate depends polynomially on the Sobolev norms of the
coefficients $W$ and $V$,
extending the previous work on eigenfunctions of the Laplacian
to this more general situation.

One of the fundamental questions in the study of zero sets is Yau's
conjecture (see~\cite{Y}), which asserts that for an eigenfunction $w$ of the Laplace-Beltrami operator on an $n$-dimensional compact Riemannian manifold $(M,g)$ with eigenvalue $\lambda$, the size of its nodal set satisfies
\begin{align}
	C_1 \lambda^{1/2}
	\leq
	\mathcal{H}^{n-1}
	(\{x\in M: w(x)=0\})
	\leq
	C_2 \lambda^{1/2},
	\llabel{EQ02}
\end{align}
for constants $C_1,C_2>0$ depending only on $(M,g)$.
Yau's conjecture was proven by Donnelly and Fefferman in \cite{DF1} under the real-analyticity assumption on the manifold.
For smooth manifolds, Hardt and Simon (see~\cite{HS}) proved an exponential upper bound $\mathcal{H}^{n-1} (\{x\in M: w(x)=0\}) \leq C \lambda^{C\sqrt{\lambda}}$.
Recently, Logunov and Malinnikova made a breakthrough towards Yau's conjecture for smooth manifolds~\cite{Lo1, Lo2,LoM1}.
In \cite{LoM1}, Logunov and Malinnikova proved $\mathcal{H}^1 (\{x\in M: w(x)=0\}) \leq C \lambda^{3/4-\eps}$ in dimension two, which slightly improves the upper bound $C\lambda^{3/4}$ by Donnelly and Fefferman \cite{DF2} and Dong \cite{Do}.
For the upper bounds in higher dimensions $n\geq 3$, Logunov in \cite{Lo1} obtained a polynomial upper bound
$\mathcal{H}^{n-1} (\{x\in M: w(x)=0\}) \leq C \lambda^\alpha$, where $\alpha>1/2$ depends only on the dimension.
The proof employs an almost monotonicity property for harmonic functions on Riemannian manifolds, a quantitative propagation-of-smallness scheme, and a combinatorial analysis of the doubling index that controls the number of high-vanishing-order cubes.
In \cite{Lo2}, Logunov proved the Nadirashvili's conjecture and showed the lower bound in Yau's conjecture for a compact $C^\infty$-smooth Riemannian manifold without boundary.
See~\cite{LoM2} for a survey of Yau's conjecture and  \cite{LZ, KZZ, LMNN, LS, LTY, NV, Z2, ZZ} for related results on the nodal sets of elliptic equations in various settings.
We emphasize that, after harmonic lifting, a Laplace eigenfunction can be regarded as a harmonic function on a suitable product manifold, which allows application of an almost monotonicity property for harmonic function (see \cite{Lo1} for instance). 
In contrast, no analogous reduction is available for the more general Schr\"odinger-type equation \eqref{EQ01} with variable lower-order terms.

The study of nodal sets is closely connected to the unique continuation property for partial differential equations, which asserts that a nontrivial solution cannot vanish on any open set.
In particular, the quantitative unique continuation results for the Schr\"odinger operator $-\Delta + V$ have been extensively studied using two techniques.
The first relies on Carleman estimates, which were pioneered by Carleman (see~\cite{Car}) and Aronszajn (see~\cite{Ar});
see also \cite{ABG, AKS, DZ, JK, SS} for related results.
The second is the frequency approach, which was introduced by
Almgren (see \cite{Al}) for harmonic functions and extended
in \cite{GL} by Garofalo-Lin
to elliptic operators with sufficiently smooth coefficients.
The frequency approach leads in some cases to a sharp bound
on the order of vanishing (see~\cite{Ku2} for instance);
see also \cite{BG, Ku1, Z1} for related results.
More recently, Davey in \cite{Da} obtained quantitative strong unique continuation estimates for the generalized Schr\"odinger equation by employing a modified frequency function approach, providing an alternative to earlier proofs based on Carleman estimates.
We emphasize that the frequency approach seems to lead to \emph{almost monotonicity} of the frequency, which is one of the important devices needed in the paper~\cite{Lo1} by Logunov.

The goal of this paper is to provide a polynomial upper bound on the Hausdorff measure of the nodal set for the Schr\"odinger equation \eqref{EQ01}, under a local doubling condition concerning the growth of the solution.
After rescaling, we introduce a refined $L^2$-based frequency function and prove its almost-monotonicity, together with a corresponding control of the $L^2$-based doubling index.
To translate doubling estimates into nodal set bounds,
we draw ideas from Logunov (see~\cite{Lo1}), including dissecting the cube, small-scale Cauchy uniqueness arguments, and a combinatorial covering scheme to prevent high-vanishing-order regions from clustering.
We emphasize that we use the frequency function associated to the full ball~\cite{KN}, leading to some simplifications in the arguments (see Lemmas~ \ref{L00}--\ref{L04} for instance).
Finally, building on the modified frequency function, we extend the classical three-ball argument to a four-ball inequality which, together with our growth hypothesis, yields a uniform upper bound on the doubling index.

The paper is organized as follows.
In Section~\ref{sec02}, we state the main result, Theorem~\ref{T01}, asserting a polynomial upper bound on the size of the nodal set.
In Section~\ref{sec03}, we introduce the $L^2$-based frequency function and the doubling index,
and 
establish the almost monotonicity property for the latter.
In Section~\ref{sec04}, we derive an upper bound on the number of dyadic cubes with large doubling index, and in Section~\ref{sec05} we combine the bound with our global doubling index estimate to complete the proof of the main result.

\section{Main result}
\label{sec02}
Consider a solution $w$ of the Schr\"odinger equation
\begin{align}
\Delta w
= 
W\cdot \nabla w
+
V  w
\label{EQ03}
\end{align}
in a neighborhood of $\bar B_2\subset \mathbb{R}^n$, where $n\geq 2$.
We assume the solution $w$ satisfies a local doubling inequality
\begin{align}
	\Vert w\Vert_{L^2 (B_{2})}
	\leq
	e^\kappa
	\Vert w\Vert_{L^2 (B_{1})}
	,
	\label{EQ04}
\end{align}
where $\kappa\geq 1$ is a constant.
Set
$M=
\Vert V\Vert_{W^{1,\infty} (B_2)}
+
\Vert W\Vert_{W^{1,\infty} (B_2)}$+1.

The following theorem provides an algebraic upper bound for the Hausdorff measure of the nodal set in terms of the coefficients.

\cole
\begin{Theorem}
\label{T01}
Let $n\geq 2$.
Suppose $w$ is a solution of \eqref{EQ03} in a neighborhood of
$\bar B_2\subset \RR^n$ and satisfies~\eqref{EQ04}.
Then we have
\begin{align}
	\mathcal{H}^{n-1}
	(\{w=0\}\cap B_1)
	\leq
	C 
	\left(M^2
	+
	\kappa
	\right)^C,
	\llabel{EQ05}
\end{align}
\colb
where $C>0$ is a constant.
\end{Theorem}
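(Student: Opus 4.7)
The plan is to follow the Logunov blueprint for bounding the Hausdorff measure of the nodal set, adapted to the Schr\"odinger setting where the frequency function is no longer monotone but only almost monotone. First, I would introduce an $L^2$-based frequency function $N(x,r)$ on balls $B(x,r)\subset B_2$, built from $H(x,r)=\int_{B(x,r)} w^2$ and its first derivative, together with the associated doubling index $\beta(x,r)=\log_2(H(x,2r)/H(x,r))$. Using \eqref{EQ03} and Cauchy--Schwarz, I would derive an almost-monotonicity estimate of the form $N(x,r_1)\le C\,N(x,r_2)+CM^2$ for $r_1<r_2$ comparable, with the $M^2$ term absorbing the lower-order contributions coming from $W\cdot\nabla w+Vw$. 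This yields an initial upper bound on $\beta$ in terms of $\kappa+M^2$ at the reference scale, as already indicated by the hypothesis~\eqref{EQ04}.

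Next, building on the almost-monotonicity, I would upgrade the three-ball inequality to a four-ball inequality of the form
\begin{align*}
\|w\|_{L^2(B(x,r_2))} \le C\,\|w\|_{L^2(B(x,r_1))}^{\theta}\,\|w\|_{L^2(B(x,r_3))}^{1-\theta} + \text{error}(M),
\end{align*}
valid for a suitable quadruple of radii, with $\theta\in(0,1)$ and constants polynomial in $M$. Chained with the global hypothesis~\eqref{EQ04}, this gives a uniform doubling index bound $\beta(x,r)\le C(\kappa+M^2)$ on all sub-balls $B(x,r)\subset B_{3/2}$ with $r$ not too small.

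Then, following Logunov's combinatorial dissection, I would partition a cube $Q\subset B_1$ of side $\ell$ into $A^n$ sub-cubes of side $\ell/A$ for a large dimension-dependent integer $A$. The key estimate, deduced from iterated four-ball inequalities and the almost-monotonicity of $N$, is that the number of sub-cubes whose doubling index exceeds roughly half of the parent index is at most $C A^{n-1-\eta}$ for some $\eta>0$. Iterating this across dyadic scales and invoking a small-scale Cauchy uniqueness argument to convert a bound on the doubling index into a bound on the local nodal volume (since on small enough cubes $w$ behaves like a polynomial of degree $\lesssim\beta$), one obtains $\mathcal{H}^{n-1}(\{w=0\}\cap Q)\lesssim (\kappa+M^2)^C$ by the standard recursion.

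The main obstacle is the variable-coefficient setting. Unlike the eigenfunction case, harmonic lifting is unavailable, and the frequency is only almost monotone, so every step that exploits exact monotonicity in \cite{Lo1} must be replaced with a version carrying an $M^2$ remainder. The delicate part is to ensure that these remainders remain polynomial in $M$ after being propagated through the combinatorial iteration, rather than accumulating exponentially; this is where the four-ball refinement and the use of the frequency associated to the full ball (rather than annular versions) are essential, since they absorb the commutator-type errors produced by $W\cdot\nabla w+Vw$ without degrading the exponents.
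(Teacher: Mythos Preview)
Your proposal assembles the right ingredients---almost-monotonicity of the frequency, a four-ball inequality, Cauchy uniqueness at small scales, and Logunov's combinatorial sub-cube counting---but it departs from the paper's route in one structurally important respect: the paper does \emph{not} carry $M^2$ remainder terms through the combinatorial iteration. Instead, its very first move (in the proof of Theorem~\ref{T01}) is to rescale $u(x)=w(x/M)$, $\tilde W(x)=M^{-1}W(x/M)$, $\tilde V(x)=M^{-2}V(x/M)$, so that the new equation lives on $B_{2M}$ with coefficients satisfying $\|\tilde W\|_{W^{1,\infty}}+\|\tilde V\|_{W^{1,\infty}}\le 1$. All of Sections~\ref{sec03}--\ref{sec04} (frequency and doubling-index almost-monotonicity, the simplex lemma, the hyperplane/sub-cube counting of Lemmas~\ref{L07}--\ref{L10}, and the nodal bound of Lemma~\ref{L11}) are then proved under this normalized hypothesis~\eqref{EQ07}, with constants depending only on the dimension. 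The $M$-dependence reappears only at the end, through the domain radius $R=2M$ in the four-ball inequality of Lemma~\ref{Lfourball} (giving the $CR^2$ term) and the resulting doubling bound $N_u(Q)\le C(M^2+\kappa)$ of Lemma~\ref{Lnodal}, together with a harmless $M^n$ factor from covering $B_M$ by unit-scale cubes.

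Your closing worry about remainder accumulation is therefore exactly the difficulty the paper sidesteps. If you run the iteration with $M^2$ additive errors present at every scale, the parameters $\eta$, $N_0$, $A$ in the sub-cube counting (your ``$CA^{n-1-\eta}$'' estimate) and the constants in the simplex lemma would a priori depend on $M$; this in turn makes the exponent $\alpha$ in the bound $\mathcal H^{n-1}(\{u=0\}\cap Q)\le C\,N(Q)^{\alpha}$ depend on $M$, destroying the polynomial conclusion. The rescaling is precisely what decouples the combinatorics (dimension-only constants) from the coefficient size, and is the missing idea in your outline.
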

\colb

Throughout the paper, we use $C\geq 1$ to denote a sufficiently large constant, which may depend on $n$ without mention; the value of $C$ may vary from line to line.
We use $B_r (x)$ to denote a ball of radius $r$ centered at the point $x$, abbreviated by $B_r$ when the center is~$0$.

\section{Almost monotonicity of the doubling index}
\label{sec03}
We consider a solution $u$ to
\begin{align}
\Delta u
=
W\cdot \nabla u
+
V u
\label{EQ06}
\end{align}
in a domain $\Omega\subset \RR^n$, 
where the coefficients $W$ and $V$ satisfy 
\begin{align}
	\Vert V\Vert_{W^{1,\infty} (\Omega)} + 
	\Vert W\Vert_{W^{1,\infty} (\Omega)}
	\leq 1.
	\label{EQ07}
\end{align}
In Sections~\ref{sec03}--\ref{sec05}, the solution $u$ is assumed to satisfy \eqref{EQ06}--\eqref{EQ07}, unless stated otherwise.

\subsection{The frequency function}
For $x\in \Omega$ and $r>0$, we define
\begin{equation}
H(x,r) 
= 
\int_{B_r (x)} u^2 (y)
\,dy
\llabel{EQ08}
\end{equation}
and
\begin{equation}
I(x,r)
=
\int_{B_r (x)} 
|\nabla u (y)|^2 (r^2-|y-x|^2)
\,dy.
\llabel{EQ09}
\end{equation}
We define the frequency function by
\begin{align}
\begin{split}
	F(x,r)
	=
	\frac{I(x,r)}{2H(x,r)}
	.
\end{split}
\llabel{EQ10}
\end{align}

The following lemma establishes the almost monotonicity of the frequency function.
\cole
\begin{Lemma}
\label{Lmonotonenew}
There exist constants $C,\ccc>0$ such that
	\begin{equation}
	\left|
	\frac{H'(x,r)}{H(x,r)}
	- 
	\frac{n}{r}
	- 
	\frac{2F (x,r)}{r}
	\right|
	\leq C (r+1)
	\label{EQ11}
\end{equation}
and
\begin{align}
	\frac{d}{dr}
	\left(	\left(
	F (x,r)
	+
	\ccc r^2 
	+ 
	\ccc r^{4}
	\right)
	e^{2r^2 +4r}\right)
	\geq 0,
	\label{EQ12}
\end{align}
for all $x\in \Omega$.
\end{Lemma}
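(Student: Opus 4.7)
The plan is to fix $x\in\Omega$ (which we take to be $0$ by translation) and work with the weight $\varphi(y):=r^2-|y|^2$, which vanishes on $\partial B_r$, so that every boundary term in the integrations by parts below drops out. Throughout, write $J(r):=\int_{B_r}|\nabla u|^2\,dy$.

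For \eqref{EQ11}, one IBP against $\varphi$ gives
\[
I = -\int_{B_r} u\Delta u\,\varphi\,dy - \int_{B_r} u\,\nabla u\cdot\nabla\varphi\,dy
  = -\int_{B_r} u\Delta u\,\varphi\,dy + rH'(r) - nH(r),
\]
using $\nabla\varphi=-2y$ and $\int_{B_r} y\cdot\nabla(u^2) = rH'-nH$. Substituting \eqref{EQ06} produces $rH' = nH + I + E_1$ with $E_1 := \int_{B_r} u(W\cdot\nabla u + Vu)\varphi\,dy$. The $V$-piece is bounded by $r^2 H$ directly. For the $W$-piece, rewriting $uW\cdot\nabla u=\tfrac12 W\cdot\nabla(u^2)$ and integrating by parts once more against $\varphi$ (again the boundary vanishes) produces the volume terms $-\tfrac12\int u^2\varphi\,\div W$ and $\int u^2\,W\cdot y$, controlled by $Cr^2 H$ and $CrH$. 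Dividing by $rH$ gives $|E_1|/(rH)\leq 2(r+1)$, which is \eqref{EQ11}.

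For \eqref{EQ12}, I would next test \eqref{EQ06} against the Pohozaev multiplier $(y\cdot\nabla u)\varphi$. Using $\nabla u\cdot\nabla(y\cdot\nabla u) = |\nabla u|^2 + \tfrac12 y\cdot\nabla(|\nabla u|^2)$ and $\int y\cdot\nabla(|\nabla u|^2)\varphi = 2r^2 J - (n+2)I$ (another IBP via $\div(y\varphi) = n\varphi - 2|y|^2$), the Rellich-type identity
\[
r^2 J(r) = \tfrac{n}{2}\, I + 2\int_{B_r}(y\cdot\nabla u)^2\,dy - E_2,
\qquad
E_2 := \int_{B_r}(W\cdot\nabla u + Vu)(y\cdot\nabla u)\varphi\,dy,
\]
emerges. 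Since $\varphi|_{\partial B_r}=0$, $I'(r) = 2rJ(r)$; combining this with the $H'/H$ formula from the first estimate and multiplying $F'/F = I'/I - H'/H$ by $F=I/(2H)$ yields
\[
F'(r) = \frac{2}{rH}\int_{B_r}(y\cdot\nabla u)^2\,dy - \frac{2F^2}{r} - \frac{E_2}{rH} - \frac{FE_1}{rH}.
\]
The crucial absorption is Cauchy--Schwarz: since $\int u(y\cdot\nabla u) = \tfrac12(rH' - nH) = FH + \tfrac12 E_1$, the bound $\bigl(\int u(y\cdot\nabla u)\bigr)^2 \leq H\int(y\cdot\nabla u)^2$ gives $\tfrac{2}{rH}\int(y\cdot\nabla u)^2 \geq \tfrac{2F^2}{r} + \tfrac{2FE_1}{rH} + \tfrac{E_1^2}{2rH^2}$. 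Discarding the last (nonnegative) summand cancels $-2F^2/r$ exactly, leaving $F' \geq -F|E_1|/(rH) - |E_2|/(rH)$. A parallel estimate $|E_2|/(rH) \leq 2F + Cr(r+1)$ (the $W$-part via $|y\cdot\nabla u|\leq r|\nabla u|$ yielding $rI$; the $V$-part via the same IBP trick used for $E_1$) then delivers $F'(r) \geq -(2r+4)F(r) - Cr(r+1)$.

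To conclude \eqref{EQ12}, set $G(r) := (F + C_m(r^2+r^4))e^{2r^2+4r}$. Differentiating,
\[
G'(r)\,e^{-(2r^2+4r)} = F'(r) + C_m(2r + 4r^3) + \bigl(F + C_m(r^2+r^4)\bigr)(4r+4),
\]
and substituting $F' \geq -(2r+4)F - Cr(r+1)$ leaves a net $F$-coefficient of $(4r+4) - (2r+4) = 2r \geq 0$, together with a purely $r$-dependent remainder $C_m\bigl((4r+4)(r^2+r^4) + 2r + 4r^3\bigr) - Cr(r+1) \geq C_m(2r+4r^2) - Cr(r+1)$, which is nonnegative provided $C_m \geq C/2$. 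The main technical obstacle is the exact cancellation in the Cauchy--Schwarz step: it relies on the coefficient $2F^2/r$ appearing in $F'$ matching identically the $2F^2/r$ produced by Cauchy--Schwarz on $\int u(y\cdot\nabla u)$, and this compatibility is precisely what makes $\varphi = r^2-|y|^2$ the natural weight for the full-ball frequency function of \cite{KN}; everything else is dimensional bookkeeping absorbed into the free constant $C_m$.
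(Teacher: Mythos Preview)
Your argument is correct and follows essentially the same route as the paper: both compute $H'$ via the divergence theorem (yielding \eqref{EQ11}) and then derive a differential inequality for $F$ by combining $I'$ with $H'$ and applying Cauchy--Schwarz to the term $\int u(y\cdot\nabla u)$, which produces exactly the $2F^2/r$ needed to cancel the negative main term.

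The packaging differs slightly and is arguably cleaner in your version. The paper differentiates $I$ by rescaling to the unit ball and then expands $I'H-IH'$ into four error pieces $\mathcal I_1,\ldots,\mathcal I_4$, several of which are controlled by $F^{1/2}$ via direct Cauchy--Schwarz; this leads to the inequality $F' \ge -\sqrt 2\,rF^{1/2} - r^3/2 - \sqrt 2\,r^2 F^{1/2} - \tfrac{5r}{2}F - 3F$, and the $F^{1/2}$ terms must then be absorbed by Young's inequality before one can read off \eqref{EQ12}. Your route instead uses the simpler observation $I'(r)=2rJ(r)$ together with a Pohozaev identity for $J$, groups all lower-order errors into $E_1,E_2$, and integrates the $V$-part of $E_2$ by parts so that no $F^{1/2}$ appears; the resulting inequality $F' \ge -(2r+4)F - Cr(r+1)$ feeds directly into the exponential weight $e^{2r^2+4r}$ with no further manipulation. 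One small cosmetic point: your constant ``$2$'' in $|E_1|/(rH)\le 2(r+1)$ is exact under the paper's convention $|V|,|W|,|\div W|\le 1$, while the $V$-part of $E_2$ picks up a dimensional factor $(n+2)$ from $\div(y\varphi)$ --- this is harmless since it lands in the free constant $C_m$, but it is the one place where the ``$C$'' in $Cr(r+1)$ genuinely depends on $n$.
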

\colb

\begin{proof}
By translation, we may assume that $x=0$.
Direct computation gives
\begin{align}
\begin{split}
	H'(r)
	&=
	\int_{\partial B_r (0)} u^2\,d\sigma
	=
	\frac{1}{r} 
	\int_{B_r (0)} \div(u^2 y)
	= 
	\frac{n}{r} H(r)
	+
	\frac{2}{r}
	\int_{B_r (0)} y_j u\partial_{j} u
	,
\end{split}
\label{EQ13}
\end{align}
which leads to
\begin{align}
	\begin{split}
		H'(r)
		=
		\frac{n}{r} H(r)
		- 
		\frac1r
		\int_{B_r (0)} u\partial_{j}u \partial_{j}(r^2-|y|^2)
		.
	\end{split}
	\llabel{EQ14}
\end{align}
Here and below, all unindicated integrals are taken over $B_r (0)$ with respect to~$y$. 
We integrate by parts and use \eqref{EQ06} to obtain
\begin{align}
\begin{split}
	H'(r)
	&=
	\frac{n}{r} H(r)
	+
	\frac1r
	\int |\nabla u|^2 (r^2-|y|^2)
	+ 
	\frac1r
	\int u \Delta u
	(r^2-|y|^2)
	\\&
	=
	\frac{n}{r} H(r)
	+
	\frac{1}{r}
	I(r)
	+ 
			\frac{1}{r}
	\int V u^2
	(r^2-|y|^2)
	+
			\frac{1}{r}
	\int u W \cdot\nabla u
	(r^2 - |y|^2)
	.
\end{split}
\label{EQ15}
\end{align}
For the last term on the far right-hand side of \eqref{EQ15}, we integrate by parts to obtain
\begin{align}
\int
u W \cdot \nabla u (r^2 - |y|^2)
=
-\frac{1}{2}
\int \dive W u^2
(r^2 - |y|^2)
+
\int W_k y_k u^2.
\label{EQ16}
\end{align}
From \eqref{EQ15} and \eqref{EQ16}, it follows that
\begin{align}
	\left|H' -\frac{n}{r} H - \frac{1}{r} I 
	\right|
	\leq
	C(r+1)H,
	\llabel{EQ17}
\end{align}
which completes the proof of~\eqref{EQ11}.
Note that
\begin{align}
	I(r)= r^{n+2}\int_{B_1} \partial_{j} u(ry)\partial_{j}u(ry)(1-|y|^2)
	.
	\label{EQ18}
\end{align}
Differentiating \eqref{EQ18} gives
\begin{align}
	\begin{split}
		I'
		&=
		(n+2) r^{n+1}
		\int_{B_1}
		\partial_{j} u(ry)\partial_{j}u(ry)(1-|y|^2)
		+
		2r^{n+2}
		\int_{B_1}
		y_k \partial_{jk}u(ry) \partial_{j}u(ry) (1-|y|^2)
		\\&
		=
		\frac{n+2}{r}
		I
		+ 
		\frac{2}{r}
		\int y_k \partial_{jk} u \partial_{j}u (r^2-|y|^2)
		\\&
		=
		\frac{n+2}{r}
		I
		- 
		\frac2r
		\int \delta_{jk}
		\partial_{j}u\partial_{k}u (r^2-|y|^2)
		- \frac2r
		\int
		y_k\partial_{k} u \Delta u (r^2-|y|^2)
		+ 
		\frac4r
		\int y_k \partial_{k} u y_j \partial_{j} u
		\\&
		=
		\frac{n}{r}
		I
		+
		\frac4r
		\int y_k \partial_{k} u y_j \partial_{j} u
		- \frac2r
		\int
		y_k V u\partial_{k}u  (r^2-|y|^2)
			- \frac2r
		\int
		y_k  W\cdot \nabla u
		\partial_{k}u  (r^2-|y|^2)
		,
	\end{split}
	\llabel{EQ19}
\end{align}
where we used \eqref{EQ06} in the last step.
Hence, we obtain
\begin{align}
	\begin{split}
		I'H 
		&=
		\frac{n}{r} I H
		+
		\left(
		\frac4r
		\int y_k \partial_{k} u y_j \partial_{j} u
		\right)
		\left(
		\int u^2
		\right)
		- 
		\frac{2	H}{r}
		\int
		y_k V u\partial_{k}u  (r^2-|y|^2)
				\\&\quad\quad
		- 
		\frac{2	H}{r}
		\int
		y_k W\cdot \nabla u\partial_{k}u  (r^2-|y|^2).
	\end{split}
	\label{EQ20}
\end{align}
Using \eqref{EQ13} and the identity
\begin{equation}
	I(r)
	=
	2\int y_j u\partial_{j}u
	-
	\int V u^2 (r^2-|y|^2)
	-
	\int u W\cdot \nabla u (r^2-|y|^2)
,
	\label{EQ21}
\end{equation}
we obtain
\begin{align}
	\begin{split}
		I H'
		&
		=
		\frac{n}{r}   
		I H
		+ 
		\left(
		\frac{2}{r}
		\int y_j u\partial_{j} u
		\right)
		\left(
		2
		\int y_j u\partial_{j} u
		\right)
		-
		\left(\frac{2}{r}
		\int y_j u\partial_{j} u
		\right)
		\left(
		\int V u^2(r^2-|y|^2)
		\right)						\\&\quad\quad
		-
		\left(
		\frac{2}{r}
		\int y_j u\partial_{j} u
		\right)
		\left(
		\int
		u W\cdot \nabla u 
		(r^2 -|y|^2)
		\right)
		.
	\end{split}
	\label{EQ22}
\end{align}
From \eqref{EQ20} and \eqref{EQ22} and the Cauchy-Schwarz inequality, it follows that
\begin{align}
	\begin{split}
		F'(r)
		&=
		\frac{I' H  - I H' }{2 H^2 }
		\geq
		-
		\frac{1}{rH}
		\int
		y_k V u\partial_{k}u  (r^2-|y|^2)
		+
		\left(	\frac{1}{rH^2}
		\int y_j u \partial_{j} u\right)
		\left(	\int V u^2(r^2-|y|^2)\right)
		\\&\quad\quad
		-
		\frac{1}{rH}
		\int y_k W\cdot \nabla u \partial_k u (r^2 -|y|^2)
		+
		\left(	\frac{1}{rH^2}
		\int y_j u \partial_{j} u\right)
		\left(
		\int
		u W \cdot \nabla u (r^2 - |y|^2)
		\right)
		\\&
		:=
		\mathcal{I}_1
		+
		\mathcal{I}_2
		+	
		\mathcal{I}_3
		+	
		\mathcal{I}_4
		.
	\end{split}
	\label{EQ23}
\end{align}
For the term $\mathcal{I}_1$, we use the Cauchy-Schwarz inequality to get
\begin{align}
|\mathcal{I}_1|
\leq
\frac{r}{H} \int |u| |\nabla u|
(r^2 -|y|^2)^{1/2}
\leq
\frac{r}{H} 
\left(\int u^2\right)^{1/2}
\left(
\int
|\nabla u|^2 (r^2 -|y|^2)
\right)^{1/2}
\leq
\sqrt{2} rF^{1/2}
,
	\label{EQ24}	
\end{align}
while for $\mathcal{I}_2$,
we appeal to \eqref{EQ21} and \eqref{EQ24}, yielding
\begin{align}
\begin{split}
	|\mathcal{I}_2|
&\leq
\frac{I}{2rH^2}
	\left(	\int |V| u^2(r^2-|y|^2)
	\right)
	+
	\frac{1}{2rH^2}
	\left(\int
	|V| u^2 (r^2-|y|^2)\right)
	\left(	\int |V| u^2(r^2-|y|^2)
	\right)
	\\&\quad\quad
	+
	\frac{1}{2rH^2}
	\left(\int
	|u W\cdot \nabla u| (r^2-|y|^2)\right)
	\left(	\int |V| u^2(r^2-|y|^2)
	\right)
	\\&
	\leq
	rF
	+
	\frac{r^3}{2}
	+
	\frac{r^2 F^{1/2}}{\sqrt{2} }.	\label{EQ25}
\end{split}
\end{align}
The term $\mathcal{I}_3$ can be estimated as
\begin{align}
|\mathcal{I}_3|
\leq
\frac{1}{H}
\int |\nabla u|^2 
(r^2 -|y|^2)
=
2F,
\label{EQ26}
\end{align}
and for $\mathcal{I}_4$ we use \eqref{EQ16} and proceed analogously to~\eqref{EQ25} to get
\begin{align}
	\begin{split}
	|\mathcal{I}_4|
	&\leq	
	\frac{I}{2rH^2}
	\Big|
	\int
	u W \cdot \nabla u (r^2 - |y|^2)
	\Big|
	+
	\frac{1}{2rH^2}
	\left(\int
	|V| u^2 (r^2-|y|^2)\right)
		\left(
	\int
	|u W \cdot \nabla u |
	(r^2 - |y|^2)
	\right)
	\\&\quad\quad
	+
	\frac{1}{2rH^2}
	\left(\int
	|u W\cdot \nabla u| (r^2-|y|^2)\right)
	\left(
	\int
	|u W \cdot \nabla u| (r^2 - |y|^2)
	\right)
	\\&
	\leq
	\frac{I}{2rH^2}
	\left(
	\frac{1}{2} r^2 H +rH
	\right)
	+
	\frac{r}{2H} r I^{1/2} H^{1/2}
	+
	\frac{I^{1/2}}{2H^{3/2
	}} rI^{1/2} H^{1/2}
	\\&
	\leq 
	F
	+
	\frac{3r}{2} F
	+
	\frac{r^2 F^{1/2}}{\sqrt{2}}.
\label{EQ27}
\end{split}
\end{align}
Combining \eqref{EQ23}--\eqref{EQ27}, we arrive at
\begin{align}
\begin{split}
	F'(r)
	\geq
	-\sqrt{2} rF^{1/2}
	-\frac{r^3}{2}
	-\sqrt{2} r^2 F^{1/2}
	-\frac{5r}{2} F-3F 
	,
\end{split}
\llabel{EQ28}
\end{align}
which leads to
\begin{equation}
\left(
\left(
F (r)
+
\ccc r^2 
+ 
\ccc r^{4}
\right)
e^{2r^2 +4r}
\right)'
\geq 0
,
\llabel{EQ29}
\end{equation}
for some constant $\ccc\geq 1$,
and \eqref{EQ12} follows.
\end{proof}

The following lemma provides an estimate for the growth of $H (x,r)$ in terms of the frequency function.
\cole
\begin{Lemma}
\label{L00}
Let $\eps\in (0,1/2]$.
For $0<r_1< r_2\leq -\log (1-\eps)/6\ccc$, 
where $C_m\geq 1$ is the constant from Lemma~\ref{Lmonotonenew},
we have
\begin{align}
F (x,r_2)
\geq 
(1-\epsilon) 
F(x,r_1)
-\eps
\label{EQ30}
\end{align}
and
\begin{align}
\begin{split}
&\left(
1
+
2(1-\eps) F (x,r_1) 
\right)
\log
\left(
\frac{r_2}{r_1}
\right)
-C
\leq
\log
\left(
\frac{ H (x,r_2)}{ H (x,r_1)}
\right)
\\&\quad\quad
\leq
	\left(	
	C
	+ 
	\frac{2}{1-\eps}F (x,r_2)
	\right)
	\log
	\left(
	\frac{r_2}{r_1}
	\right)
	+C,
	\label{EQ31}
\end{split}
\end{align}
for any $x\in \Omega$.
\end{Lemma}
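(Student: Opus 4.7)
The plan is to prove \eqref{EQ30} first by extracting it directly from the almost monotonicity statement \eqref{EQ12}, and then to derive \eqref{EQ31} by integrating the logarithmic derivative bound \eqref{EQ11} and plugging in \eqref{EQ30} on both sides.

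\textbf{Step 1 (proof of \eqref{EQ30}).} By \eqref{EQ12}, the quantity $(F(x,r)+\ccc r^2+\ccc r^4)e^{2r^2+4r}$ is nondecreasing in $r$, so
\[
F(x,r_2)+\ccc r_2^2+\ccc r_2^4
\;\geq\;
e^{-(2r_2^2+4r_2)}\,(F(x,r_1)+\ccc r_1^2+\ccc r_1^4)
\;\geq\;
F(x,r_1)\,e^{-(2r_2^2+4r_2)}.
\]
The hypothesis $r_2\leq -\log(1-\eps)/(6\ccc)$ together with $\eps\leq 1/2$ forces $r_2$ to be small (in particular $r_2\leq 1$), so $2r_2^2+4r_2\leq 6r_2\leq -\log(1-\eps)$ and therefore $e^{-(2r_2^2+4r_2)}\geq 1-\eps$. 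This gives
\[
F(x,r_2)\;\geq\;(1-\eps)F(x,r_1)-\ccc r_2^2-\ccc r_2^4.
\]
Using $-\log(1-\eps)\leq 2\eps$ for $\eps\leq 1/2$ one finds $r_2\leq \eps/(3\ccc)$, whence $\ccc r_2^2+\ccc r_2^4\leq \eps$, and \eqref{EQ30} follows.

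\textbf{Step 2 (setup for \eqref{EQ31}).} Integrating \eqref{EQ11} over $[r_1,r_2]$ and using $r_2\leq 1$ to absorb the error $\int_{r_1}^{r_2}C(r+1)\,dr$ into a universal constant $C$, I obtain
\[
\left|\log\frac{H(x,r_2)}{H(x,r_1)} - n\log\frac{r_2}{r_1} - \int_{r_1}^{r_2}\frac{2F(x,r)}{r}\,dr\right|\;\leq\; C.
\]
So the task reduces to sandwiching the integral $\int_{r_1}^{r_2}2F(x,r)/r\,dr$ by the desired coefficients times $\log(r_2/r_1)$.

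\textbf{Step 3 (lower bound).} Applying \eqref{EQ30} to the pair $(r_1,r)$ for every $r\in[r_1,r_2]$ (permissible since $r\leq r_2\leq -\log(1-\eps)/(6\ccc)$) yields $F(x,r)\geq (1-\eps)F(x,r_1)-\eps$, so
\[
\int_{r_1}^{r_2}\frac{2F(x,r)}{r}\,dr\;\geq\;\bigl(2(1-\eps)F(x,r_1)-2\eps\bigr)\log\frac{r_2}{r_1}.
\]
Since $n\geq 2$ and $2\eps\leq 1$, the combination $n\log(r_2/r_1)-2\eps\log(r_2/r_1)\geq \log(r_2/r_1)$, which absorbs the $-2\eps\log(r_2/r_1)$ slack and yields the claimed lower bound in \eqref{EQ31}.

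\textbf{Step 4 (upper bound).} Symmetrically, applying \eqref{EQ30} to the pair $(r,r_2)$ for $r\in[r_1,r_2]$ gives $F(x,r_2)\geq (1-\eps)F(x,r)-\eps$, i.e.\ $F(x,r)\leq (F(x,r_2)+\eps)/(1-\eps)$, hence
\[
\int_{r_1}^{r_2}\frac{2F(x,r)}{r}\,dr\;\leq\;\frac{2F(x,r_2)+2\eps}{1-\eps}\log\frac{r_2}{r_1}.
\]
Since $\eps\leq 1/2$, the constant $n+2\eps/(1-\eps)$ is bounded by a universal $C$, which gives the upper bound in \eqref{EQ31}.

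\textbf{Main obstacle.} The only delicate bookkeeping is the choice of threshold $-\log(1-\eps)/(6\ccc)$: one has to verify that this is small enough to (i) make the exponential factor $e^{-(2r_2^2+4r_2)}$ at least $1-\eps$ and (ii) make the polynomial remainder $\ccc r_2^2+\ccc r_2^4$ absorbable into $\eps$. Everything else is a careful matching of coefficients when integrating the almost-monotonicity identity \eqref{EQ11}.
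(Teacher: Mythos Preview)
Your proof is correct and follows essentially the same route as the paper: first extract \eqref{EQ30} from the monotonicity \eqref{EQ12} by bounding the exponential and polynomial correction terms using the smallness of $r_2$, then integrate \eqref{EQ11} and apply \eqref{EQ30} to the pairs $(r_1,r)$ and $(r,r_2)$ to sandwich $\int 2F/r\,dr$. In fact, you spell out more carefully than the paper why the threshold $-\log(1-\eps)/(6\ccc)$ suffices for both the exponential factor and the polynomial remainder.
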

\colb
We emphasize that the constant $C\geq1$ is independent of~$\eps$.

\begin{proof}
Let $x\in\Omega$.
For $0<r_1 <r_2<-\log (1-\eps)/6\ccc$, Lemma~\ref{Lmonotonenew} implies that
\begin{align}
	\left(F(x,r_2) 
	+ 
	\ccc r_2^2 
	+\ccc r_2^4
	\right) e^{ 2r_2^2 +4r_2}
	\geq
	\left(F(x,r_1) 
	+ 
	\ccc r_1^2 
	+\ccc r_1^4
	\right) e^{ 2r_1^2 +4r_1}
	> F(x,r_1),
	\llabel{EQ32}
\end{align}
which leads to~\eqref{EQ30}.
Next, we integrate \eqref{EQ11} on both sides from $r_1$ to $r_2$ and appeal to \eqref{EQ30}, yielding
\begin{align}
\begin{split}
	\log
	\left(	\frac{H(x,r_2)}{H(x,r_1)}\right)
	&\leq
	C\log
	\left(
	\frac{r_2}{r_1}
	\right)
	+
	\int_{r_1}^{r_2} \frac{2F(x,\rho)}{\rho}\,d\rho
	+
	C
	\\&
	\leq
	C \log \left(\frac{r_2}{r_1}\right)
	+
	\frac{2(\eps+ F(x,r_2))}{1-\eps}
	\log \left(
	\frac{r_2}{r_1}
	\right)
	+C
		\\&
	\leq
	\left(	C+ \frac{2}{1-\eps} F(x,r_2)
	\right)
	\log
	\left(
	\frac{r_2}{r_1}
	\right)
	+C
	\llabel{EQ33}
\end{split}
\end{align}
and
\begin{align}
\begin{split}
\log
\left(	\frac{H(x,r_2)}{H(x,r_1)}
\right)
&\geq
n
\log 
\left(
\frac{r_2}{r_1}
\right)
+
2((1-\eps) F(x,r_1)-\eps)
\log
\left(
\frac{r_2}{r_1}
\right)
-C
	\\&
\geq
\left(
1+
2(1-\eps)
F(x,r_1) 
\right)
\log
\left(
\frac{r_2}{r_1}
\right)-C.
\llabel{EQ34}
\end{split}
\end{align}
Combining the above two inequalities, we complete the proof of~\eqref{EQ31}.
\end{proof}

\subsection{The doubling index}
We define the doubling index as
\begin{align}
N_u (x,r)
=
\log_2 
\left(
\frac{H (x,2r)}{H (x,r)}
\right)
=
\log_2
\left(
\frac{\int_{B_{2r}(x)}  u^2}{\int_{B_r(x)}  u^2}
\right)
.
\llabel{EQ35}
\end{align}

The following lemma establishes the almost monotonicity of the doubling index.
\cole
\begin{Lemma}
\label{L02}
Let $\eps \in (0,1/2]$.
For $0<r<-\log (1-\eps)/12\ccc$, where $\ccc\geq 1$ is the constant from Lemma~\ref{Lmonotonenew},
we have
\begin{align}
2(1-\eps) F(x,r)
-
C
\leq
N (x,r)
\leq
\frac{2}{1-\eps} F (x,2r)
+C,
\label{EQ36}
\end{align}
for all $x\in \Omega$.
For $0<2r_1\leq r_2<-\log (1-\eps)/12\ccc$, we have
\begin{align}
N(x,r_2)
\geq 
(1-\eps)^3
N(x,r_1) 
-
C,
\label{EQ37}
\end{align}
for all $x\in \Omega$.
\end{Lemma}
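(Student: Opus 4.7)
The proof will assemble \eqref{EQ36} and \eqref{EQ37} directly from the three inputs already available: the two-sided comparison \eqref{EQ31} between $\log(H(x,r_2)/H(x,r_1))$ and the frequency, the almost-monotonicity \eqref{EQ30} of the frequency, and the definition of $N(x,r)$ as $\log_2$ of a ratio of $H$'s. No new analytic computation is needed; the work is purely algebraic bookkeeping of the small-loss factors $(1-\eps)$.

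For the first inequality \eqref{EQ36}, the plan is to specialize \eqref{EQ31} in Lemma~\ref{L00} to $r_1 = r$ and $r_2 = 2r$. The hypothesis $0<r<-\log(1-\eps)/12\ccc$ gives $2r<-\log(1-\eps)/6\ccc$, so Lemma~\ref{L00} applies. Since $\log(r_2/r_1)=\log 2$ is a harmless constant and $N(x,r)=\log(H(x,2r)/H(x,r))/\log 2$, dividing the lower bound in \eqref{EQ31} by $\log 2$ yields
\begin{equation*}
N(x,r)\;\geq\;\bigl(1+2(1-\eps)F(x,r)\bigr)-C\;\geq\;2(1-\eps)F(x,r)-C,
\end{equation*}
and the upper bound similarly produces $N(x,r)\leq (2/(1-\eps))F(x,2r)+C$, after absorbing the additive constants from \eqref{EQ31} into a fresh $C$. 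This gives \eqref{EQ36}.

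For the second inequality \eqref{EQ37}, the plan is to chain the two directions of \eqref{EQ36} with the almost-monotonicity \eqref{EQ30}. Since $0<2r_1\leq r_2<-\log(1-\eps)/12\ccc<-\log(1-\eps)/6\ccc$, inequality \eqref{EQ30} of Lemma~\ref{Lmonotonenew} applies with the pair $(2r_1,r_2)$ and gives $F(x,r_2)\geq (1-\eps)F(x,2r_1)-\eps$. Applying the lower half of \eqref{EQ36} at $r_2$ and the upper half at $r_1$, we obtain
\begin{equation*}
N(x,r_2)\;\geq\;2(1-\eps)F(x,r_2)-C
\quad\text{and}\quad
F(x,2r_1)\;\geq\;\tfrac{1-\eps}{2}\bigl(N(x,r_1)-C\bigr).
\end{equation*}
Inserting the first $F$-bound into the $N(x,r_2)$ estimate, then the second, produces
\begin{equation*}
N(x,r_2)\;\geq\;2(1-\eps)^2 F(x,2r_1)-2(1-\eps)\eps-C\;\geq\;(1-\eps)^3 N(x,r_1)-C',
\end{equation*}
after absorbing the bounded terms $2(1-\eps)\eps$, the $(1-\eps)^3 C$ arising from the substitution, and the constants from \eqref{EQ36} into one final constant $C'$. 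This delivers \eqref{EQ37}.

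There is no genuine obstacle; the only delicate point is arithmetic: verifying the three factors $(1-\eps)$ combine to $(1-\eps)^3$ and confirming the admissibility threshold $-\log(1-\eps)/12\ccc$ is consistent with the threshold $-\log(1-\eps)/6\ccc$ required in \eqref{EQ30} once $r_1$ is doubled. Both are immediate from $2\cdot 12\ccc\geq 6\ccc$ and a direct multiplication of the three small-loss factors, so the estimates \eqref{EQ36} and \eqref{EQ37} follow cleanly from Lemmas~\ref{Lmonotonenew} and~\ref{L00}.
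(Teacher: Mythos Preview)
Your proof is correct and follows essentially the same route as the paper: specialize \eqref{EQ31} to $(r,2r)$ for \eqref{EQ36}, then chain the lower half of \eqref{EQ36} at $r_2$, the almost-monotonicity \eqref{EQ30} on the pair $(2r_1,r_2)$, and the upper half of \eqref{EQ36} at $r_1$ to get \eqref{EQ37}. One small slip: \eqref{EQ30} is stated in Lemma~\ref{L00}, not Lemma~\ref{Lmonotonenew}.
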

\colb
We emphasize that the constant $C\geq1$ is independent of~$\eps$.

\begin{proof}
Let $x\in \Omega$.
For $r\in (0, -\log (1-\eps)/12\ccc)$, we appeal to Lemma~\ref{L00} to obtain
\begin{align}
\begin{split}
	2(1-\eps) F(x,r)
	-C
	\leq
	\log_2
	\left(
	\frac{H (x,2r)}{H (x,r)}
	\right)
	\leq
	\frac{2}{1-\eps}
	F(x,2r)+C
	\llabel{EQ38}
       ,
\end{split}
\end{align}
which gives~\eqref{EQ36}.
Let $0<2r_1\leq r_2<-\log (1-\eps)/12 \ccc$.
We infer from Lemma~\ref{L00} and \eqref{EQ36} that
\begin{align}
\begin{split}
	N(x,r_2)
	&\geq 
	2(1-\eps) F(x,r_2)
	-
	C
\geq 2(1-\eps)
((1-\eps) F(x,2r_1) -\eps)
-C
\\&
\geq
2(1-\eps)^2
\cdot
\frac{1-\eps}{2}
(N(x,r_1)-C)
-C
\\&
\geq 
(1-\eps)^3
N(x,r_1)
-C,
\llabel{EQ39}
\end{split}
\end{align}
completing the proof of~\eqref{EQ37}.
\end{proof}

The following lemma provides an estimate on the growth of $H(x,r)$ in terms of the doubling index.
\cole
\begin{Lemma}
\label{L04}
Let $\eps \in (0,1/2]$.
There exists a constant $C\geq1$ such that 
\begin{align}
	t^{(1-\epsilon)^2 N(x,r)-C}
	\leq
	\frac{H(x,tr)}{H(x,r)}
	\leq
	t^{(1-\epsilon)^{-2} N (x,tr) +C},
	\label{EQ40}
\end{align}
for
$r\in 
\left(0, -\log (1-\eps/2)/24\ccc
\right)$, $
t\in \left[2, -\log (1-\eps/2)/12 r \ccc\right]$,
and $x\in \Omega$, where $C_m\geq 1$ is the constant from Lemma~\ref{Lmonotonenew}.
Consequently, there exists a constant $N_0>0$, depending on $\eps$, such that if $N (x,r) \geq N_0$ for $r\in 
\left(0, -\log (1-\eps/2)/24\ccc
\right)$ and $x\in \Omega$, then
\begin{align}
t^{(1-\eps)^3 N (x,r)}
\leq
\frac{H(x,tr)}{H(x,r)}
\leq
t^{(1-\eps)^{-3} N (x,tr)}
\comma
2\leq  t \leq 	\frac{-\log (1-\eps/2)}{12 r \ccc}.
\label{EQ42}
\end{align}
\end{Lemma}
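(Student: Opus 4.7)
The plan is to deduce \eqref{EQ40} from Lemma~\ref{L00} by converting the frequency on the right-hand side into the doubling index via Lemma~\ref{L02}. Both lemmas apply with parameter $\eps$ for $r,tr$ in the ranges stated in~\eqref{EQ40}, since $-\log(1-\eps/2)\le -\log(1-\eps)$. The main subtlety is the scale asymmetry in~\eqref{EQ36}: the lower inequality controls $F$ at the same scale as $N$, but the upper inequality only lower-bounds $F$ at the \emph{doubled} scale, which forces a short decomposition in the lower bound.

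For the upper half of~\eqref{EQ40}, I would apply the right inequality in~\eqref{EQ31} with $r_1=r$, $r_2=tr$ and then invoke the lower inequality in~\eqref{EQ36} at scale $tr$ to bound $F(x,tr)\le (N(x,tr)+C)/(2(1-\eps))$. Substituting and dividing by $\log t\ge\log 2$ absorbs the additive constants into the exponent, producing the factor $(1-\eps)^{-2}N(x,tr)+C$.

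For the lower half, since the natural lower bound on $F$ from~\eqref{EQ36} is only available at the doubled scale, I would split
\[
\frac{H(x,tr)}{H(x,r)}=\frac{H(x,tr)}{H(x,2r)}\cdot 2^{N(x,r)},
\]
apply the left inequality in~\eqref{EQ31} on $[2r,tr]$ (legitimate because $t\ge 2$), and substitute $F(x,2r)\ge (1-\eps)(N(x,r)-C)/2$ from~\eqref{EQ36}. The resulting expression $N(x,r)\bigl[(1-\eps)^2\log(t/2)+\log 2\bigr]-C$ can be rewritten as $(1-\eps)^2 N(x,r)\log t + N(x,r)\bigl(1-(1-\eps)^2\bigr)\log 2-C$, and since $H$ is nondecreasing in $r$ we have $N(x,r)\ge 0$, so the middle term is nonnegative and can be dropped. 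Dividing by $\log t\ge\log 2$ yields the claimed lower bound.

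For the consequence~\eqref{EQ42}, I would invoke the almost monotonicity~\eqref{EQ37} to transfer the hypothesis $N(x,r)\ge N_0$ to $N(x,tr)\ge (1-\eps)^3 N_0-C$, and then choose $N_0=N_0(\eps)$ large enough that the additive $\pm C$ in~\eqref{EQ40} can be absorbed into the multiplicative factor at the cost of an extra $(1-\eps)$, upgrading $(1-\eps)^{\pm 2}$ to $(1-\eps)^{\pm 3}$. Concretely this reduces to arranging $\eps(1-\eps)^2 N(x,r)\ge C$ and $\eps N(x,tr)\ge C(1-\eps)^3$, both of which hold for $N_0$ sufficiently large depending on~$\eps$. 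The only real obstacle is the bookkeeping in the lower bound, driven by the scale asymmetry in Lemma~\ref{L02}; everything else is routine substitution and absorption of constants.
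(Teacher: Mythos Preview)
Your proposal is correct and, for the lower bound in~\eqref{EQ40} and for the consequence~\eqref{EQ42}, follows the paper's approach essentially line for line (the paper also splits off $2^{N(x,r)}$ and applies~\eqref{EQ31} on $[2r,tr]$, then feeds in~\eqref{EQ36}). One minor sloppiness in your lower-bound sketch: after substituting $F(x,2r)\ge(1-\eps)(N(x,r)-C)/2$ into~\eqref{EQ31} you obtain a coefficient $1+(1-\eps)^2(N(x,r)-C)$ in front of $\log(t/2)$, so there is an extra term of the form $C_0\log(t/2)$ with $C_0$ a constant of uncertain sign; this is harmless since it becomes $C_0\log t$ after expanding $\log(t/2)$ and is absorbed into the final $-C$ in the exponent, but your write-up suppressed it.

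For the \emph{upper} bound in~\eqref{EQ40} you take a slightly more direct route than the paper. The paper again uses the split through $H(x,2r)$, applies~\eqref{EQ31} on $[2r,tr]$, and is then left with a residual term $N(x,r)\log 2-(1-\eps)^{-2}N(x,tr)\log 2$ that it controls via the almost-monotonicity~\eqref{EQ37} (applied with parameter $\eps/2$) together with the elementary inequality $(1-\eps/2)^3\ge(1-\eps)^2$. You instead apply~\eqref{EQ31} directly on $[r,tr]$ and invoke~\eqref{EQ36} at scale $tr$, which eliminates the residual term entirely and avoids the appeal to~\eqref{EQ37} at this stage. Both arguments are valid; yours is a small simplification, while the paper's has the modest advantage of treating both halves of~\eqref{EQ40} by a single uniform decomposition.
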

\colb

\begin{proof}
Let $x\in \Omega$,
$r\in \left(0, -\log (1-\eps/2)/24\ccc \right)$, and $t\in [2, -\log (1-\eps/2)/12rC_m]$.
By Lemmas~\ref{L00} and~\ref{L02}, we have
\begin{align}
\begin{split}
&
\log
\left(
\frac{H(x,tr)}{2^{N(x,r)} H(x,r)}
\right)
=
\log
\left(
\frac{H(x,tr)}{H(x,2r)}
\right)
\geq
\left(
1+
2(1-\eps) 
F(x,2r)
\right)
\log 
\left(
\frac{tr}{2r}
\right)
-C
\\&\quad\quad
\geq
\left(
1
+ 
2(1-\eps)
\cdot
\frac{1-\eps}{2}
\cdot
(N(x,r)-C)
\right)
\log \left(
\frac{t}{2}
\right)
-C
\\&\quad\quad
\geq
\left(
-C+(1-\eps)^2 N(x,r)
\right)
\log 
t
-
N(x,r)
\log 2,
\llabel{EQ43}
\end{split}
\end{align}
which leads to the lower bound in~\eqref{EQ40}.
Similarly, we have
\begin{align}
\begin{split}
	&
	\log
	\left(
	\frac{H(x,tr)}{2^{N(x,r)} H(x,r)}
	\right)
	=
	\log
	\left(\frac{H(x,tr)}{H(x,2r)}\right)
	\leq
	\left(C
	+ 
	\frac{2}{1-\eps}
	F(x, tr)
	\right)
	\log 
	\left(
	\frac{t}{2}
	\right)
	+C
	\\&\quad\quad
	\leq
	\left(
	C
	+ 
	\frac{2}{1-\eps}
	\cdot
	\frac{1}{2(1-\eps)}
	\cdot
	(N(x,tr)+C)
	\right)
	\log \left(\frac{t}{2}\right)
	+C
	\\&\quad\quad
	\leq
	\left(
	C+\frac{1}{(1-\eps)^2} N(x,tr)
	\right)
	\log t
	-
	\frac{1}{(1-\eps)^2}
	N(x,tr)
	\log 2.
	\llabel{EQ44}
\end{split}
\end{align}
Hence, 
\begin{align}
	\log 
	\left(
	\frac{H(x,tr)}{H(x,r)}
	\right)
	\leq
	N(x,r) \log 2
	-\frac{1}{(1-\eps)^2}
	N(x,tr) \log 2
	+\left(
	C+\frac{1}{(1-\eps)^2} N(x,tr)
	\right)
	\log t
        .
   \llabel{EQ45}
\end{align}
From Lemma~\ref{L02}, it follows that
\begin{align}
N(x,tr)\geq 
\left(1-\frac{\eps}{2}
\right)^3 
N(x,r)-C,
\label{EQ46}
\end{align}
from where
\begin{align}
	\log 
	\left(
	\frac{H(x,tr)}{H(x,r)}
	\right)
	\leq
	\left(
	C+\frac{1}{(1-\eps)^2} N(x,tr)
	\right)
	\log t
        ,
	\llabel{EQ47}
\end{align}
providing the upper bound in~\eqref{EQ40}.
Using \eqref{EQ40},
we conclude the proof of \eqref{EQ42}
by appealing to \eqref{EQ46} and taking $N_0>0$ sufficiently large, depending on~$\eps$.
\end{proof}

\section{Number of cubes with large doubling index}
\label{sec04}
\subsection{Simplex lemmas}
A simplex $S\subset \RR^n$ with vertices $x_1,x_2,\ldots,x_{n+1} \in\mathbb{R}^n$ is defined as the convex hull
\begin{align}
S=\left\{
\sum_{i=1}^{n+1} t_{i} x_{i}: t_1+t_2+\cdots +t_{n+1}=1,~ t_i\geq 0\right\}.
   \llabel{EQ48}
\end{align}
We denote the barycenter of the simplex by $x_0= \sum_{i=1}^{n+1} x_i / (n+1)$ and the relative width by $\omega(S):=\mywidth S /\diam S$, where $\mywidth S$ is the minimal distance between a pair of parallel hyperplanes such that $S$ is contained between them.

The following lemma gives a slight increment of the doubling index at the barycenter of a simplex from the doubling indices at its vertices.
\cole
\begin{Lemma}
\label{LSimplex}
Let $\alpha>0$.
There exist constants $ C_0, C_1,C_2, K_\alpha, N_\alpha>0$, depending on $\alpha$, such that if 
\begin{enumerate}
	\item $\omega (S) >\alpha$ and $\diam S<C_0$ and
\item there exist $r_j\in (0,K_\alpha\diam S/2]$, where $j=1,2,\ldots,n+1$, with
\begin{align}
	\min_{j=1,\ldots,n+1} N(x_j, r_j) 
	\geq 
	N_\alpha,
   \llabel{EQ49}
\end{align}
\end{enumerate}
then
\begin{align}
N (x_0, C_2 \diam S) 
>(1+C_1) \min_{j=1,\ldots,n+1} N(x_j, r_j).
\llabel{EQ50}
\end{align}
\end{Lemma}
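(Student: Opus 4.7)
The plan is to propagate the vertex doubling information to the barycenter and extract a strict improvement from the simplex non-degeneracy $\omega(S)>\alpha$. Set $N:=\min_j N(x_j,r_j)$ and fix $\eps=\eps(\alpha)\in(0,1/4]$ small, $K_\alpha$ small, $C_2\geq 2$ large, and $C_0=C_0(\alpha,\eps)$ small enough that every radius in the argument lies within the admissible ranges of Lemmas~\ref{L00}--\ref{L04}. Put $R:=C_2\diam S$; the ratio $R/r_j\geq 2C_2/K_\alpha$ can be made arbitrarily large by choosing $K_\alpha$ small.

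The easier ingredient is a lower bound on $H(x_0,2R)$. For each $j$, the hypothesis $N(x_j,r_j)\geq N\geq N_0(\eps)$ triggers Lemma~\ref{L04}, yielding
\[
H(x_j,s)\geq H(x_j,r_j)\,(s/r_j)^{(1-\eps)^3 N}
\]
for $s$ in a large range. Using the trivial inclusion $B_R(x_j)\subset B_{R+\diam S}(x_0)\subset B_{2R}(x_0)$ (valid for $C_2\geq 1$) and selecting $j_0$ realizing the minimum, we obtain
\[
H(x_0,2R)\geq H(x_{j_0},R)\geq H(x_{j_0},r_{j_0})\,(R/r_{j_0})^{(1-\eps)^3 N}.
\]

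The heart of the argument is a matching upper bound
\[
H(x_0,R)\leq C(\alpha)\,H(x_{j_0},r_{j_0})\,(R/r_{j_0})^{\mu N}
\]
with an exponent $\mu=\mu(\alpha)<(1-\eps)^3$, the strict inequality being crucial. Such a bound cannot come from the single-point frequency machinery alone; it must be extracted by a quantitative propagation-of-smallness argument chaining a path of overlapping balls from $x_{j_0}$ to $x_0$, with each transition controlled by a three/four-ball inequality built from the frequency function of Section~\ref{sec03}. The width hypothesis $\omega(S)>\alpha$ is essential here: it prevents the $n+1$ vertices from clustering in a thin slab through $x_0$, which is exactly what forces the propagation path to have controlled length and yields an exponent \emph{strictly smaller} than the monotonicity exponent $(1-\eps)^3$.

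Combining the two estimates and taking $\log_2$,
\[
N(x_0,R)=\log_2\frac{H(x_0,2R)}{H(x_0,R)}\geq \bigl((1-\eps)^3-\mu\bigr)\,N\log_2(R/r_{j_0})-C,
\]
and the right-hand side exceeds $(1+C_1)N$ once $K_\alpha$ is small enough (so that $\log_2(R/r_{j_0})\geq\log_2(2C_2/K_\alpha)$ is large in terms of $\alpha$) and $N_\alpha$ is large enough to absorb the additive constant $C$. The main obstacle is the upper bound in the previous paragraph; everything else is bookkeeping with the estimates already established in Section~\ref{sec03}.
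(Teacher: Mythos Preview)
Your proposal has a genuine gap at exactly the step you flag as ``the heart of the argument'': the upper bound $H(x_0,R)\leq C(\alpha)\,H(x_{j_0},r_{j_0})\,(R/r_{j_0})^{\mu N}$ with $\mu<(1-\eps)^3$. The mechanism you suggest---chaining three/four-ball inequalities along a path from $x_{j_0}$ to $x_0$ and using the width hypothesis to control the path length---does not produce such a bound. A three-ball inequality interpolates between small and large scales and always \emph{loses} an exponent; it cannot manufacture growth strictly slower than what the single-point frequency machinery already gives. Moreover, the distance $|x_0-x_{j_0}|\leq\diam S$ is negligible compared to $R=C_2\diam S$, so there is nothing for a ``path length'' argument to exploit. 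And by fixing the single vertex $x_{j_0}$ realizing the minimum of $N(x_j,r_j)$, you discard the information carried by the remaining $n$ vertices, which is precisely where the non-degeneracy $\omega(S)>\alpha$ enters.

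The paper's argument is structurally different. The width hypothesis is used through a purely Euclidean covering fact: there exist $C_\alpha>0$ and $K_\alpha$ with
\[
B\bigl(x_0,(1+C_\alpha)r\bigr)\subset\bigcup_{j=1}^{n+1}B(x_j,r),\qquad r:=K_\alpha\diam S.
\]
This immediately gives the needed upper bound $H(x_0,(1+C_\alpha)r)\leq (n+1)M$ with $M:=\max_j H(x_j,r)=H(x_i,r)$. One then applies Lemma~\ref{L04} at the \emph{maximizing} vertex $x_i$ to get $H(x_i,tr)\geq M\,t^{(1-\eps)^6 N}$, and the trivial inclusion $B(x_i,tr)\subset B(x_0,(1+\delta)tr)$ with $\delta=\diam S/tr$. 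Combining,
\[
\frac{H(x_0,(1+\delta)tr)}{H(x_0,(1+C_\alpha)r)}\geq \frac{t^{(1-\eps)^6 N}}{n+1},
\]
while the ratio of radii is only $(1+\delta)t/(1+C_\alpha)$. Choosing $t$ large enough that $\delta<C_\alpha/2$, this radius ratio is strictly less than $t$; feeding the display into the upper bound of Lemma~\ref{L04} at $x_0$ yields $N(x_0,(1+\delta)tr)\geq (1+C_1)N$. The strict gain comes from the factor $(1+C_\alpha)/(1+\delta)>1$ in the radii, which is exactly the geometric content of the covering lemma---not from any propagation-of-smallness chain.
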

\colb
This lemma is an analog of~\cite[Lemma~2.1]{Lo1}.
For completeness,
we provide the proof in Appendix~\ref{AppSimplex}.

Let $q\subset \mathbb{R}^n$ be a closed cube.
We denote the relative width of $E\subset q$ by
\begin{align}
	\tilde{\omega}_q (E) 
	= \frac{\mywidth{E}}{\diam{q}},
   \llabel{EQ51}
\end{align}
where $\mywidth E$ is the minimal distance between a pair of parallel hyperplanes such that $E$ is contained between them.

We recall an Euclidean geometry lemma from \cite[Theorem~5.1]{Lo1}.
\cole
\begin{Lemma}
[\cite{Lo1}]
\label{LG01}
Let $q\subset \mathbb{R}^n$ be a cube and $E\subset q$ with $\tilde{w}_q (E)>0$.
There exists a non-decreasing function $a= a(\tilde{\omega}_q (E))>0$ and a simplex $S$ with vertices $x_1,x_2,\ldots, x_{n+1} \in E$ such that
\begin{align}
	\omega (S)>a
	\quad \quad
	~~~\text{and}~~~\quad
	\diam S>a \cdot \diam q.
   \llabel{EQ52}
\end{align}
\end{Lemma}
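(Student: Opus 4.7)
Set $w := \tilde{\omega}_q(E) > 0$ and $D := \diam q$, so that $\mywidth E \geq wD$. The plan is to construct the vertices of $S$ inductively inside $E$, exploiting the fact that $E$ has width at least $wD$ in every direction, and then to extract quantitative lower bounds on $\mywidth S$ and $\diam S$ from an elementary volume--width--diameter inequality.

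First, choose $x_1,x_2\in E$ realizing $\diam E$, so $|x_1-x_2|\geq \mywidth E \geq wD$. Inductively, suppose $x_1,\ldots,x_k\in E$ have been chosen for some $2\leq k\leq n$, and let $V_{k-1}$ be the affine hull of $x_1,\ldots,x_k$, which has dimension at most $k-1<n$. Pick a unit vector $\nu$ orthogonal to $V_{k-1}$. By definition of $\mywidth$ as a minimum over directions, the orthogonal projection of $E$ onto $\RR\nu$ is an interval of length at least $wD$, so there exists $x_{k+1}\in E$ with
\begin{equation}
\mathrm{dist}(x_{k+1},V_{k-1})=|\langle x_{k+1}-x_1,\nu\rangle|\geq wD/2. \notag
\end{equation}
The resulting vertices are affinely independent and lie in $E$.

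Set $h_1:=|x_2-x_1|$ and $h_k:=\mathrm{dist}(x_{k+1},\mathrm{aff}(x_1,\ldots,x_k))$ for $2\leq k\leq n$, so $h_k\geq wD/2$ for every $k$. Iterating the base-times-height formula (equivalently, applying Gram--Schmidt to the determinant $\det(x_2-x_1,\ldots,x_{n+1}-x_1)$) yields $|S|=(n!)^{-1}\prod_{k=1}^n h_k\geq c_n w^n D^n$. On the other hand, $S\subset q$ forces $\diam S\leq D$, while the elementary slab estimate $|S|\leq c_n\mywidth(S)\cdot(\diam S)^{n-1}$, obtained by projecting $S$ along a width-realizing direction onto a slab of thickness $\mywidth S$, gives $\mywidth S\geq c_n |S|/(\diam S)^{n-1}\geq c_n w^n D$. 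Combined with $\diam S\geq h_1\geq wD$, this yields $\omega(S)=\mywidth(S)/\diam(S)\geq c_n w^n$, so one may take $a(w):=c_n w^n$, which is non-decreasing in $w$ (recalling $w\leq 1$ since $\mywidth E\leq \diam q$).

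The main step requiring care is the inductive orthogonal-projection argument: one must verify that at each stage the direction $\nu$ can be chosen both orthogonal to $V_{k-1}$ and with the full width bound $\mywidth E$ available, which rests on the definition of $\mywidth$ as the minimum width over all directions (and not just coordinate directions). The volume--width--diameter inequality is standard, and the rest is a straightforward finite induction in the dimension.
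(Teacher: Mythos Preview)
The paper does not prove this lemma; it is quoted without proof from \cite[Theorem~5.1]{Lo1}. Your argument is a correct, self-contained proof and yields the explicit dependence $a(w)=c_n w^n$.

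Two cosmetic points worth tightening. First, in the inductive step the equality $\mathrm{dist}(x_{k+1},V_{k-1})=|\langle x_{k+1}-x_1,\nu\rangle|$ should be ``$\geq$'': since $V_{k-1}$ has codimension $\geq 2$ in general, $\nu$ is only one direction in its orthogonal complement, and the projection onto a single orthogonal direction is a lower bound for the full distance. The inequality goes the direction you need, so the argument is unaffected. Second, the phrase ``realizing $\diam E$'' and the claim that the projection interval has length $\geq wD$ with the bound attained both tacitly assume suprema are achieved; for a non-closed $E$ this may fail. The fix is routine: replace $w$ by any $w'<w$ throughout the construction (so all the required inequalities become strict because $\mywidth E = wD > w'D$), and then set $a(w):=c_n (w/2)^n$, which is still non-decreasing and delivers the strict inequalities $\omega(S)>a$ and $\diam S>a\cdot\diam q$ as stated.
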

\colb

\subsection{Cauchy uniqueness}
The following Cauchy uniqueness theorem appears in \cite[Lemma~4.3]{Lin} and is proven in greater generality in \cite[Theorem~1.7]{ARRV}.

\cole
\begin{Lemma}
[\cite{Lin}]
\label{cauchyunique}
Let $u$ be a solution of \eqref{EQ06}--\eqref{EQ07} in the half ball $B_1^{+} = \{x\in \mathbb{R}^n: |x|<1, x_n>0\}$, and set
$\Gamma=\{x\in\mathbb{R}^n: |x|<3/4, x_n=0\}$.
There exists a constant $\eps\in (0,1)$ such that if
\begin{align}
\int_{B_1^{+}} |u|^2 \leq 1
   \llabel{EQ53}
\quad\quad
and\quad\quad
	\sup_{\Gamma} |u|
+
\sup_{\Gamma} |\nabla u|
\leq 
\eps
\end{align}
then
\begin{align}
\sup_{B_{1/2}^+ } 
|u|
\leq
C\eps^\beta,
	\llabel{EQ54}
\end{align}
where $C\geq1$ and $\beta>0$ are constants.
\end{Lemma}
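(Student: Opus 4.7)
My strategy is a Carleman estimate combined with optimization in the Carleman parameter and a propagation-of-smallness chain, following the approach of Alessandrini--Rondi--Rosset--Vessella~\cite{ARRV}. I fix a weight $\phi\in C^2(\overline{B_1^+})$ that is H\"ormander-pseudoconvex with respect to $\Delta$ and whose level sets separate a small target region $D\subset B_{1/2}^+$ from the outer shell $A:=B_1^+\setminus B_{3/4}^+$ and from $\Gamma$, meaning $a:=\inf_D \phi > b:=\sup_A \phi$ and $a > c:=\sup_\Gamma \phi$. A concrete choice is $\phi(x)=|x-x^*|^{-\gamma}$ with $x^*$ outside $\overline{B_1^+}$ placed opposite $D$, and $\gamma>0$ tuned to ensure pseudoconvexity. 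The associated Carleman inequality reads
\begin{equation*}
\tau^3 \int_{B_1^+} e^{2\tau\phi} w^2 + \tau \int_{B_1^+} e^{2\tau\phi} |\nabla w|^2 \leq C\int_{B_1^+} e^{2\tau\phi}|\Delta w|^2 + C\tau \int_\Gamma e^{2\tau\phi}(w^2+|\nabla w|^2),
\end{equation*}
valid for $w\in C_c^2(\overline{B_1^+}\setminus \partial^+ B_1^+)$ and $\tau\geq \tau_0$, where $\partial^+ B_1^+$ is the curved portion of the boundary.

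\textbf{Applying to $\chi u$.} Apply the estimate with $w=\chi u$, where $\chi\in C_c^\infty(B_{7/8})$ and $\chi\equiv 1$ on $B_{3/4}$. Since $\Delta(\chi u)=\chi(W\cdot\nabla u+Vu)+2\nabla\chi\cdot\nabla u+u\Delta\chi$, the principal part yields $C\int e^{2\tau\phi}(u^2+|\nabla u|^2)$ by \eqref{EQ07}, which is absorbed into the left side for $\tau$ large. The commutator is supported in $A$ and contributes at most $Ce^{2\tau b}\Vert u\Vert_{H^1(A)}^2 \leq Ce^{2\tau b}$, the $H^1$ bound coming from interior elliptic regularity for \eqref{EQ06} together with $\Vert u\Vert_{L^2(B_1^+)}\leq 1$. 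The $\Gamma$-boundary term is at most $C\tau\eps^2 e^{2\tau c}$. Restricting the left side to $D$ and dividing by $e^{2\tau a}$ gives
\begin{equation*}
\Vert u\Vert_{L^2(D)}^2 \leq C e^{-2\tau(a-b)} + C\eps^2 e^{-2\tau(a-c)},
\end{equation*}
and balancing the two right-hand terms by choosing $\tau$ appropriately yields $\Vert u\Vert_{L^2(D)}\leq C\eps^{\beta_0}$ for an explicit $\beta_0\in(0,1)$ determined by $a-b$ and $a-c$.

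\textbf{Propagation and $L^\infty$ upgrade.} The bound on $D$ extends to all of $B_{1/2}^+$ via a finite chain of overlapping three-ball inequalities (themselves deduced from the same Carleman estimate), each step incurring a Hölder loss while preserving the polynomial dependence on $\eps$, and giving $\Vert u\Vert_{L^2(B_{1/2}^+)}\leq C\eps^{\beta'}$ after finitely many steps. Finally, the $L^2$ bound is upgraded to the claimed $L^\infty$ bound on $B_{1/2}^+$ by Moser iteration for \eqref{EQ06}, which is valid since $W,V\in L^\infty$, possibly with a slightly smaller exponent $\beta$.

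\textbf{Main obstacle.} The principal difficulty is the construction of the weight $\phi$ satisfying simultaneously the pseudoconvexity condition and the three-way separation $a>\max(b,c)$, given that $\Gamma$ is adjacent to $B_{1/2}^+$ near the origin. Equally delicate is organizing the propagation chain so that the final bound remains polynomial in $\eps$. The lower-order coefficients $W$ and $V$ pose no essential trouble, since $\tau \gg \Vert W\Vert_\infty + \Vert V\Vert_\infty$ guarantees their absorption into the principal term.
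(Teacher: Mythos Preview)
The paper does not supply its own proof of this lemma: it is quoted as \cite[Lemma~4.3]{Lin} and attributed in greater generality to \cite[Theorem~1.7]{ARRV}, so there is nothing to compare against on the paper's side. Your Carleman-estimate sketch is precisely the ARRV route the authors point to, and the overall architecture (pseudoconvex weight, cutoff, absorption of lower-order terms for large $\tau$, optimization in $\tau$, propagation by overlapping three-ball inequalities) is the standard and correct one for conditional stability of the Cauchy problem.

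One point deserves more care. Your final step upgrades $\Vert u\Vert_{L^2(B_{1/2}^+)}\leq C\eps^{\beta'}$ to $\sup_{B_{1/2}^+}|u|\leq C\eps^{\beta}$ by ``Moser iteration,'' but interior Moser iteration only controls $|u(x)|$ by the $L^2$ norm on a ball that stays inside $B_1^+$; for points of $B_{1/2}^+$ near $\{x_n=0\}$ that ball must cross $\Gamma$, and no purely interior estimate applies there. The fix is easy---either combine the interior $L^\infty$ bound away from $\Gamma$ with the hypothesis $\sup_\Gamma|u|\leq\eps$ and a boundary Schauder/De Giorgi estimate in a thin strip, or carry an $L^2$ bound on a slightly larger half-ball (say $B_{5/8}^+$) and use local boundedness up to the flat boundary with the given Cauchy data---but it should be said explicitly rather than absorbed into ``Moser iteration.'' The weight construction you flag as the main obstacle is indeed the technical heart; the concrete choice $\phi(x)=|x-x^*|^{-\gamma}$ with $x^*$ on the $x_n$-axis below the origin does achieve the required separation, and this is exactly what is done in~\cite{ARRV}.
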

\colb

We state the setup in the next three lemmas.
Let $R\in (0,1]$ and $Q=[-R/2, R/2]^n\subset \Omega$ be a closed cube. We define the doubling index of $u$ in a cube $Q$ as
\begin{align}
	N_u (Q)
	=\sup_{x\in Q, r\in (0,\diam Q )}
	N_u (x,r)
	=
	\sup_{x\in Q, r\in (0,\diam Q )}
	\log_2 \left(
	\frac{\int_{B_{2r}(x) } u^2}{\int_{B_r (x)} u^2}
	\right).
	\llabel{EQ55}
\end{align}
Let $A\geq 3$ be an odd integer.
We divide $Q$ into $A^n$ subcubes $q$ of equal size and denote the subcubes that intersect the hyperplane $\{x_n=0\}$ by $q_{j,0}$, where $j=1,2,\ldots, A^{n-1}$.

The first lemma establishes a lower bound of the doubling index of a cube in terms of the doubling indices of the subcubes near the hyperplane $\{x_n=0\}$.
\cole
\begin{Lemma}
\label{L07}
Let $u$ be a solution of \eqref{EQ06}--\eqref{EQ07}. 
There exist an integer $A_0\geq 3$ and a
constant $N_0>0$ such that if
$A\geq A_0$ and
\begin{align}
	\min_{j=1,\ldots,A^{n-1}} N (q_{j,0}) 
	\geq 
	N_0,
	\llabel{EQ56}
\end{align}
then
\begin{align}
	N (Q)
	> 
	2 \min_{j=1,\ldots,A^{n-1}} N (q_{j,0}).
   \llabel{EQ57}
\end{align}
\end{Lemma}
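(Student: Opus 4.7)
I will argue by contradiction. Assume $N(Q) \leq 2M$ with $M := \min_j N(q_{j,0})$, normalize $\|u\|_{L^2(Q)} = 1$, and aim for a contradiction once $A_0$ and $N_0$ are chosen large enough (depending only on $n$).

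First, for each $j = 1, \ldots, A^{n-1}$, the hypothesis $N(q_{j,0}) \geq M$ gives $x_j \in q_{j,0}$ and $r_j \in (0, \diam q_{j,0}]$ with $N(x_j, r_j) \geq M$. Since $M \geq N_0$, the large-doubling regime \eqref{EQ42} of Lemma~\ref{L04} applied at $x_j$ yields $H(x_j, \rho_0) \geq (\rho_0/r_j)^{(1-\eps)^3 M} H(x_j, r_j)$ for a fixed macroscopic scale $\rho_0 \sim R$. The normalization forces $H(x_j, \rho_0) \leq C$, hence $H(x_j, r_j) \leq C (r_j/R)^{(1-\eps)^3 M} \leq C A^{-(1-\eps)^3 M}$, using $r_j \leq \diam q_{j,0} \leq \sqrt{n}R/A$. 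Standard interior $L^2$-to-$C^1$ estimates for \eqref{EQ06}--\eqref{EQ07} then translate this into pointwise smallness $|u| + r_j|\nabla u| \leq \eta := C A^{-cM}$ on $B_{r_j/4}(x_j)$, with $c > 0$ dimensional and $N_0$ chosen large enough to absorb the $r_j^{-n/2-1}$ loss.

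Next, I will upgrade this smallness to the whole subcube via a three-ball propagation-of-smallness inequality for solutions of \eqref{EQ06}: with inner ball $B_{r_j/4}(x_j)$, intermediate set $q_{j,0}$, and outer ball $B_{R}(x_j)$ on which interior regularity gives a uniform $C^1$ bound from the normalization, one concludes $\|u\|_{L^\infty(q_{j,0})} + (R/A)\|\nabla u\|_{L^\infty(q_{j,0})} \leq C \eta^{\alpha}$ for some dimensional $\alpha>0$. Since $\{q_{j,0}\}_{j=1}^{A^{n-1}}$ covers $\Gamma := \{x_n=0\}\cap Q$, both $|u|$ and $|\nabla u|$ are $\leq C\eta^{\alpha}$ throughout $\Gamma$. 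Lemma~\ref{cauchyunique}, rescaled to the half-balls $B_{R/4}^{\pm}$, then gives $\|u\|_{L^\infty(B_{R/8})} \leq C\eta^{\alpha\beta}$, and therefore $H(0, R/8) \leq C R^n \eta^{2\alpha\beta}$.

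To close the contradiction, I use $N(Q)\leq 2M$ in the opposite direction: since $Q \subset B_{\sqrt{n}R/2}(0)$ gives $H(0, \sqrt{n}R/2) \geq 1$, the upper estimate \eqref{EQ40} applied at $0$ between scales $R/8$ and $\sqrt{n}R/2$ produces $H(0, R/8) \geq 2^{-CM}$ for a dimensional $C$. Combining this with the Cauchy uniqueness bound yields $2^{-CM} \leq C A^{-c''M}$ for some $c''>0$, which is violated once $A_0$ is chosen so that $c''\log_2 A_0 > C$ and $N_0$ is then taken large enough to dominate the additive constants. The hard part will be the propagation-of-smallness step, where one must ensure that both the three-ball constant and the exponent $\alpha$ are uniform across the $A^{n-1}$ subcubes and bounded below by dimensional quantities; this, together with careful bookkeeping of how the final exponent $2\alpha\beta c \log_2 A$ compares with the loss $C\log 2$ from \eqref{EQ40}, dictates the order in which $A_0$ and $N_0$ must be chosen.
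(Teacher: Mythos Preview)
Your overall architecture matches the paper's: force $u$ and $\nabla u$ to be small on the equatorial slice $\Gamma$ from the large doubling in each $q_{j,0}$, feed this into the Cauchy uniqueness Lemma~\ref{cauchyunique} to get smallness on a macroscopic half-ball, and compare with the unit $L^2$ mass to extract a doubling index of size $\sim N\log A$. The place where you diverge from the paper, and where your sketch is shakiest, is precisely the step you flag as ``the hard part.''

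You obtain pointwise smallness only on the tiny ball $B_{r_j/4}(x_j)$, where $r_j\in(0,\diam q_{j,0}]$ may be arbitrarily small, and then invoke a three-ball inequality to propagate to all of $q_{j,0}$. The exponent $\alpha$ in that three-ball inequality behaves like $\log A/\log(R/r_j)$ and is \emph{not} bounded below by a dimensional constant as $r_j\to 0$; so the sentence ``one must ensure that \ldots the exponent $\alpha$ [is] bounded below by dimensional quantities'' cannot be satisfied as written. One can rescue the step by tracking that the smallness $\eta$ improves with $r_j$ at the right rate to compensate, but that requires a more careful computation than you indicate, and your stated bound $\eta=CA^{-cM}$, which drops the $r_j$-dependence, is not enough on its own.

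The paper bypasses this entirely. Instead of working at scale $r_j$, it first invokes the almost-monotonicity of the doubling index (Lemma~\ref{L02}) to transfer the lower bound $N(x_j,r_j)\ge N$ to the \emph{fixed} scale $4\sqrt{n}/A$, obtaining $N(x_j,4\sqrt{n}/A)\gtrsim N$. Then a single application of Lemma~\ref{L04} between the fixed scales $4\sqrt{n}/A$ and $1/32K$ gives $H(x_j,4\sqrt{n}/A)\le M\cdot(CA^{-1})^{N/2}$, and since $2q_{j,0}\subset B(x_j,4\sqrt{n}/A)$, interior estimates immediately yield the desired $L^\infty$ smallness on $q_{j,0}$. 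No propagation-of-smallness is needed, and all constants are manifestly dimensional. The paper also does not argue by contradiction: it directly exhibits a point $p$ off the hyperplane with $N(p,tr)\gtrsim N\log A$, which exceeds $2N$ once $A_0$ is large. Your contradiction route via $N(Q)\le 2M$ and the upper bound in \eqref{EQ40} is a legitimate alternative, but is slightly less direct.
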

\colb
We emphasize that $A_0$ and $N_0$ are independent of~$R$.

\begin{proof}[Proof of Lemma~\ref{L07}]
First we consider the case when $Q=[-1/2,1/2]^n$.
Let $K\geq 1$ be a constant to be determined below; in particular, we shall choose $K$ to depend on $\ccc$ only, where $\ccc\geq 1$ is the constant from Lemma~\ref{Lmonotonenew}.
Set
\begin{align}
M=H(0, 1/8K)=\int_{B (0,1/8K)} u^2,
\label{EQ58}
\end{align}
and let $A_0\geq 3$ and $N_0>0$ be large constants,
both
to be determined below.
Also, we write $\Lambda=B(0,1/16K) \cap \{x_n = 0\}$ and $N=	\min_{j=1,\ldots,A^{n-1}} N (q_{j,0})$. 
Let $x \in \Lambda$ such that
\begin{align}
|u(x)|^2+ |\nabla u(x)|^2
\geq 
\frac{1}{2}
\sup_{y\in\Lambda} (|u(y)|^2+ |\nabla u(y)|^2).
\label{EQ59}
\end{align} 
There exists $j\in \{1,2,\ldots,A^{n-1}\}$ such that
$x\in q_{j,0} \subset B(0,1/12K)$.
Since $N(q_{j,0})\geq N$,
there exists some $x_j \in q_{j,0}$ and $r_j \leq \diam q_{j,0}=\sqrt{n}/A$ such that $N(x_j, r_j) \geq N$. 
Since $B(x_j , 1/32K) \subset B(0, 1/8 K)$, we infer that
\begin{align}
H(x_j, 1/32 K)
\leq
H(0,1/8K)
= M.
\label{EQ60}
\end{align}
Using Lemma~\ref{L02} and taking $A_0,N_0>0$ sufficiently large, we obtain
\begin{align}
	N\left(x_j, \frac{4\sqrt{n}}{A}\right)
	\geq 
	\frac{99}{100} N (x_j,r_j) -C
	\geq
	\frac{9}{10}
	N (x_j,r_j),
\llabel{EQ61}
\end{align}
from where we use Lemma~\ref{L04} to write
\begin{align}
	\frac{H(x_j, 1/32K)}{H(x_j, 4\sqrt{n}/A)}
	\geq \left(\frac{A}{128K\sqrt{n}}\right)^{\frac{5}{9} N \left(x_j, 4\sqrt{n}/A\right)}
	\geq \left(\frac{A}{128K\sqrt{n}}\right)^{\frac{1}{2} N (x_j,r_j)}
	\geq \left(\frac{A}{128K\sqrt{n}}\right)^{\frac{N}{2}}.
\label{EQ62}
\end{align}
For $s>0$,
we denote by $s q_{j,0}$ the closed cube with the same center as $q_{j,0}$ and with the side-length equal to $s$ times that of $q_{j,0}$.
Note that
\begin{align}
	2q_{j,0} 
	\subset
	B\left( x_j, \frac{4\sqrt{n}}{A } \right).
	\label{EQ63}
\end{align}
From \eqref{EQ60} and \eqref{EQ62}--\eqref{EQ63}, it follows that
\begin{align}
	H({2q_{j,0}})
	\leq 
	H
	\left(
	x_j,  \frac{4\sqrt{n}}{A}
	\right)
	\leq \left(\frac{128K\sqrt{n}}{A}
	\right)^{\frac{N}{2}}
	H(x_j, 1/32 K)
	\leq \left(\frac{128K\sqrt{n}}{A}\right)^{\frac{N}{2}}
	M.
	\label{EQ64}
\end{align}
Using \eqref{EQ59}, \eqref{EQ64}, and the standard interior elliptic estimates, we obtain
\begin{align}
	\begin{split}
	\sup_{y\in \Lambda}
	&
	\left(|u(y)|^2
	+
	|\nabla u (y)|^2
	\right)
	\leq
	2(\Vert u\Vert_{L^\infty (q_{j,0})}^2
	+
	\Vert \nabla u\Vert_{L^\infty (q_{j,0})}^2)
	\leq
	C
	A^{n +2}
	\int_{2q_{j,0}} |u|^2
	\\&\quad
	\leq
	C
	A^{n+2}
	\left(\frac{128K\sqrt{n}}{A}\right)^{\frac{N}{2}} M
	\leq
	e^{-(N \log A )/4}
	M
	,
	\end{split}
	\label{EQ65}
\end{align}
where the last step follows by taking $N_0>0$ and $A_0>0$ sufficiently large depending on~$K$.

Let $q$ be a
cube in the upper half space with center $p=(0,0,\ldots, 1/32K\sqrt{n})$
and side-length $1/16K\sqrt{n}$,
with one face on the hyperplane $\{x_n=0\}$.
It is readily checked that
$q \subset B(0, 1/16K)$ and
$
B\left(0,1/32K\sqrt{n}
\right)
\cap
\{x_n=0\}
\subset
\partial q \cap \{x_n=0\}
\subset
\Lambda=B(0,1/16 K) \cap \{x_n = 0\}$.
Let $v(x)=u(x)/M^{1/2}$.
From \eqref{EQ65}, we infer
\begin{align}
	\sup_{y\in \Lambda}
	\left(|v(y)|^2
	+
	|\nabla v (y)|^2
	\right)
	=
	\frac{1}{M}
	\sup_{y\in \Lambda}
	\left(|u(y)|^2
	+
	|\nabla u (y)|^2
	\right)
	\leq
	\eps:=e^{- (N \log A)/4}
	\label{EQ66}
\end{align}
and
\begin{align}
\int_{B^+ (0,1/8K)} |v|^2
=
\frac{1}{M}
\int_{B^+ (0,1/8)}
|u|^2
\leq1.
\label{EQ67}
\end{align}
Note that $\eps\in(0,1)$ can be chosen sufficiently small by taking $A_0, N_0>0$ sufficiently large.
Therefore,
using Lemma~\ref{cauchyunique} and \eqref{EQ66}--\eqref{EQ67},
it follows that 
\begin{align}
\sup_{B^+ (0,1/32K)} |v|
\leq 
C\eps^{\beta}
,
\label{EQ68}
\end{align}
where $C\geq1$ and $\beta>0$ are constants depending on~$K$.

Since $q/4$ has side-length $1/64K\sqrt{n}$, it is readily checked that
\begin{align}
	B \left( p, \frac{1}{256K\ccc\sqrt{n}} \right) \subset 
	\frac{q}{4}
	\subset
	B^{+} (0,1/32K),
	\label{EQ69}
\end{align}
where $\ccc\geq 1$ is the constant from Lemma~\ref{Lmonotonenew}.
Let
\begin{align}
	\mu=1/7,\quad
	r=\frac{1}{256K\ccc \sqrt{n}},
	\quad
	t=\frac{-\log (1-\mu/2)}{12 r\ccc}.
	\llabel{EQ70}
\end{align}
From \eqref{EQ68} and \eqref{EQ69}, it follows that
\begin{align}
	H (p, r)
	=\int_{B(p,r)} |u|^2
	\leq 
	\int_{B^{+} (0,1/32K)} |u|^2
	\leq
	CM \eps^{2\beta}.
	\label{EQ71}
\end{align}
Now, we fix the value
\begin{align}
K=\frac{12 \ccc}{\log(14/13)}
\cdot
\frac{4\sqrt{n}+1}{32\sqrt{n}}\geq 1,
   \llabel{EQ72}
\end{align}
which implies that $B(0,1/8K) \subset B(p,tr)$.

Using \eqref{EQ58} and \eqref{EQ71}, we arrive at
\begin{align}
	\frac{H(p,tr)}{H(p, r )}
	\geq 
	\frac{H(0,1/8 K)}{H(p, r)} 
	\geq 
	\frac{1}{C\eps^{2\beta}}.
	\llabel{EQ73}
\end{align}
On the other hand,
from Lemma~\ref{L04}, it follows that
\begin{align}
\frac{H(p,tr)}{H(p, r)} \leq t^{(1-\mu)^{-2} N (p,tr)+C} 
\leq
e^{C N(p,tr)+C}.
\llabel{EQ74}
\end{align}
Combining the above two inequalities and using \eqref{EQ66}, we get
\begin{align}
	C N(p,tr) 
	\geq
	\frac{\beta  N \log A }{2} -C
	\geq 
	\frac{\beta N \log A}{3},
	\llabel{EQ75}
\end{align}
where the last inequality follows by taking $N_0>0$ sufficiently large. 
Therefore, we conclude that
\begin{align}
N(Q)
\geq 
N(p,tr)
>2N,
\label{EQ76}
\end{align}
by taking $A_0>0$ sufficiently large.

Now, we consider the case when $Q=[-R/2, R/2]^n$ where
$R\in (0,1)$.
Let
\begin{align}
\tilde{u}(x)=u(Rx),\quad
\tilde{W}(x) =R W(Rx),\quad
\tilde{V}(x) = R^2 V(Rx)
\llabel{EQ77}
\end{align}
be defined in a neighborhood of $\tilde{Q}:=[-1/2,1/2]^n$.
Then
\begin{align}
\Delta \tilde{u}
=
\tilde{W}\cdot
\nabla 
\tilde{u}
+
\tilde{V}\tilde{u},
	\llabel{EQ78}
	\end{align}
where the coefficients $\tilde{V}$ and $\tilde{W}$ satisfy 
\begin{align}
\Vert \tilde{V}\Vert_{W^{1,\infty }(\tilde{Q})}
+
\Vert \tilde{W}\Vert_{W^{1,\infty} (\tilde{Q})}
\leq 1.
	\llabel{EQ79}
\end{align}
Therefore, we proceed as in \eqref{EQ58}--\eqref{EQ76} and use a rescaling argument to conclude the proof of the lemma.
\end{proof}

\subsection{Number of subcubes with large doubling index}
The following lemma provides an upper bound on the number of subcubes near the hyperplane $\{x_n=0\}$ with large doubling index.
\cole
\begin{Lemma}
\label{L08}
Let $u$ be a solution of \eqref{EQ06}--\eqref{EQ07} and $A_0,N_0>0$ the constants from Lemma~\ref{L07}.
For any $\delta \in (0,1)$, there exists a constant $k_0\in\NN$, depending on $A_0$ and $\delta$,
such that if $ A = A_0^k$, where $k\geq k_0$, and $N (Q) \geq 2 N_0$, then
\begin{align}
\# \left\{ q_{j,0} : N(q_{j,0}) \geq \frac{N (Q)}{2}
\right\} < \delta A^{n-1}.
\llabel{EQ80}
\end{align}
\end{Lemma}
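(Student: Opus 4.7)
The plan is to iterate Lemma~\ref{L07} over all scales of the $A_0$-adic subdivision hierarchy of $Q$. Write $A = A_0^k$ and set $M = N(Q)/2$, which satisfies $M \geq N_0$ by the hypothesis $N(Q) \geq 2N_0$. Call a cube $Q' \subset Q$ \emph{bad} if $N(Q') \geq M$. The monotonicity $N(Q') \leq N(Q'')$ for $Q' \subset Q''$, which follows directly from the supremum in the definition of $N$, implies that the bad cubes form a subtree of the subdivision tree.

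For $j = 0, 1, \ldots, k$, let $\mathcal{T}_j^h$ be the collection of subcubes of $Q$ at scale $A_0^j$ (of side length $R/A_0^j$) that intersect the hyperplane $\{x_n = 0\}$, and let $\mathcal{B}_j \subset \mathcal{T}_j^h$ consist of the bad ones. Since $A_0$ is odd, the middle slice at each scale is centered on $\{x_n = 0\}$, so every $Q' \in \mathcal{T}_j^h$ is in fact axially centered on the hyperplane; this is exactly the geometric configuration needed to apply Lemma~\ref{L07} (after translation) with $Q'$ in the role of $Q$ there.

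The key step is the recursion $|\mathcal{B}_{j+1}| \leq (A_0^{n-1} - 1)\,|\mathcal{B}_j|$. For each $Q' \in \mathcal{B}_j$ with $j < k$, apply Lemma~\ref{L07} to $Q'$ with subdivision factor $A_0$; this produces $\min_i N(\text{child}_i) < \max(N_0,\, N(Q')/2) \leq M$, where one uses $N_0 \leq M$ together with the monotonicity bound $N(Q') \leq N(Q) = 2M$. Hence at least one of the $A_0^{n-1}$ hyperplane-intersecting children of $Q'$ is good, and the recursion follows. Starting from $|\mathcal{B}_0| = 1$ (the root $Q$ is itself bad and intersects the hyperplane), iterating gives $|\mathcal{B}_k| \leq (A_0^{n-1} - 1)^k$; dividing by $A^{n-1} = A_0^{k(n-1)}$ shows that the bad fraction is at most $(1 - A_0^{-(n-1)})^k$. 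Choosing $k_0 = \lceil \log(1/\delta) / \log(A_0^{n-1}/(A_0^{n-1} - 1)) \rceil$ drives this below $\delta$.

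The main obstacle is the book-keeping needed to ensure Lemma~\ref{L07} genuinely applies at every intermediate scale and yields a \emph{globally} good child (below the threshold $M$) rather than merely a locally good one (below $N(Q')/2$). Applicability requires that each $Q'$ has side length $R/A_0^j \leq 1$ so that the scaling in Lemma~\ref{L07} is valid, that $Q' \subset \Omega$ (inherited from $Q \subset \Omega$), and that the coefficient bounds $\Vert V\Vert_{W^{1,\infty}},\Vert W\Vert_{W^{1,\infty}} \leq 1$ are preserved under the translation sending $Q'$ to a cube centered at the origin. The globalization is precisely where the monotonicity $N(Q') \leq N(Q) = 2M$ and the hypothesis $N(Q) \geq 2N_0$ are crucially combined: together they force $\max(N_0,\, N(Q')/2) \leq M$ in the dichotomy above, without which the recursion would collapse.
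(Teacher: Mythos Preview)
Your proof is correct and follows essentially the same approach as the paper's: iterate the contrapositive of Lemma~\ref{L07} down the $A_0$-adic hierarchy, using monotonicity of $N(\cdot)$ together with the hypothesis $N(Q)\ge 2N_0$ to ensure that the dichotomy $\min_i N(\text{child}_i)<\max(N_0,N(Q')/2)$ always falls below the global threshold $N(Q)/2$, and conclude with the geometric bound $(1-A_0^{-(n-1)})^k<\delta$. The paper's proof is organized slightly differently (it tracks the counts $M_1,M_2,\dots$ explicitly rather than naming the recursion), but the content is identical.
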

\colb

\begin{proof}
Let $N=N(Q)$.
First,
we divide the cube $Q$ into $A_0^n$ subcubes and denote the subcubes that intersect $\{x_n=0\}$ by $q_{j_1,0}$, where $j_1=1,2,\ldots, A_0^{n-1}$.
Using Lemma~\ref{L07}, we infer that there exists a subcube $ q_{j_1,0}$  with $N(q_{j_1,0}) < N/2$, and
denote by $M_1$ the number of subcubes $q_{j_1,0}$ with $N(q_{j_1,0} )  
\geq 
N/2$ after the first division.
It follows that
\begin{align}
M_1 
\leq 
A_0^{n-1}-1.
\label{EQ81}
\end{align}
Next, we further divide each $q_{j_1,0}$ into  $A_0^n$  subcubes of equal size, where $j_1=1,2,\ldots, A_0^{n-1}$.
Denote by $q_{j_1,j_2,0}$ the subcubes that intersect with $\{x_n=0\}$, where $j_1,j_2=1,2,\ldots,A_0^{n-1}$.
Denote by $M_2$ the number of subcubes $q_{j_1,j_2,0}$ with $N(q_{j_1,j_2,0}) \geq N/2$ after the second division.
Note that if  $N(q_{j_1,0})< N/2$ for some $j_1=1,\ldots,A_0^{n-1}$, 
then $N(q_{j_1,j_2,0}) <N/2$ for any $j_2=1,\ldots,A_0^{n-1}$.
Using Lemma~\ref{L07} and \eqref{EQ81}, we obtain
\begin{align}
M_2 
\leq M_1 \cdot 
(A_0^{n-1}-1)
\leq 
(A_0^{n-1}-1)^2
.
\llabel{EQ82}
\end{align}

Let $k\in\mathbb{N}$. 
We iterate the above partitioning, and denote by $M_k$ the number of subcubes $q_{j_1,\ldots,j_k,0}$ with $N(q_{j_1,\ldots,j_k,0})\geq N/2$ after the $k$-th division. 
It follows that
\begin{align}
M_k 
\leq
(A_0^{n-1} -1)^k 
=
A_0^{k(n-1)} 
\left(
1-A_0^{1-n}
\right)^k
.
\label{EQ83}
\end{align}
For $\delta\in (0,1)$, we fix an integer $k_0\in \NN$ such that $(1-A_0^{1-n})^{k_0}<\delta$.
Let $A=A_0^k$, where $k\geq k_0$.
Then from \eqref{EQ83} it follows that
\begin{align}
	\# \left\{ q_{j,0} : N(q_{j,0}) \geq \frac{N (Q)}{2}
	\right\}
	=M_k 
	< 
	\delta A_0^{k(n-1)}
	=\delta A^{n-1},
   \llabel{EQ84}
\end{align}
which completes the proof of the lemma.
\end{proof}

The following lemma provides an upper bound on the number of subcubes with large doubling index.
\cole
\begin{Lemma}
\label{L10}
Let $u$ be a solution of \eqref{EQ06}--\eqref{EQ07}.
There exist constants $\eta, N_0>0$ and an odd integer $A>0$ such that
if $N (Q) \geq N_0$, then
\begin{align}
\#
\left\{
q: N (q)>\frac{N (Q)}{1+\eta}
\right\}
<\frac{1}{2} A^{n-1}.
\llabel{EQ85}
\end{align}
\end{Lemma}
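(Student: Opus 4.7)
The plan is to argue by contradiction using a dichotomy between a ``simplex'' (non-degenerate) and a ``hyperplane'' (near-planar) geometry of the witnesses of bad subcubes. Assume for contradiction that at least $\frac{1}{2} A^{n-1}$ subcubes $q$ of $Q$ satisfy $N(q) > N(Q)/(1+\eta)$. For each such bad $q$, since $N(q)$ is defined as a supremum, there exist $x_q \in q$ and $r_q \in (0, \diam q)$ with $N(x_q, r_q) > N(Q)/(1+\eta)$. Let $E = \{x_q\}$ denote the set of witness points, so $|E| \geq \frac{1}{2} A^{n-1}$. I fix a threshold $\alpha_* > 0$, to be chosen, bounding the relative width $\tilde{\omega}_Q(E)$.

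\textbf{Simplex case: $\tilde{\omega}_Q(E) \geq \alpha_*$.} Lemma~\ref{LG01} produces a simplex $S \subset Q$ with vertices $x_{j_1}, \ldots, x_{j_{n+1}} \in E$ satisfying $\omega(S) \geq a_* := a(\alpha_*)$ and $\diam S \geq a_* \diam Q$. The witness radii satisfy $r_{j_i} < R\sqrt{n}/A \leq K_{a_*} \diam S / 2$ provided $A \geq 2/(K_{a_*} a_*)$, and $N(x_{j_i}, r_{j_i}) \geq N_{a_*}$ once $N_0$ is chosen large in terms of $N_{a_*}$ and $\eta$. Lemma~\ref{LSimplex} then yields
\[
  N(x_0, C_2 \diam S) > \frac{1 + C_1}{1 + \eta} N(Q)
\]
at the barycenter $x_0 \in Q$. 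Choosing $\eta = C_1/2$, the right-hand side strictly exceeds $N(Q)$, contradicting its definition as a supremum over $x \in Q$ and $r \in (0, \diam Q)$ (noting $x_0 \in Q$ and $C_2 \diam S < \diam Q$).

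\textbf{Hyperplane case: $\tilde{\omega}_Q(E) < \alpha_*$.} Then $E$ lies in a slab of width $\alpha_* \diam Q$ perpendicular to some unit vector $e$. Choosing $\alpha_*$ small relative to $1/A$, the slab crosses only a bounded number of subcube layers perpendicular to $e$. After an orthogonal change of variables aligning $e$ with $e_n$ (which preserves the form of \eqref{EQ06} and the $W^{1,\infty}$-bounds on $V, W$), Lemma~\ref{L08} applied with $\delta = 1/4$ in each such layer---valid since $N(q) > N(Q)/(1+\eta) \geq N(Q)/2$ for $\eta \leq 1$---caps the bad cubes per layer at $< \delta A^{n-1}$, forcing $|E| < \frac{1}{2} A^{n-1}$, a contradiction.

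\textbf{Parameter balancing and main obstacle.} The principal technical difficulty is a circular dependence: the constants $C_1, K_{a_*}, N_{a_*}$ depend on $\alpha_*$, the hyperplane case requires $\alpha_*$ small relative to $1/A$, and the simplex case requires $A$ large relative to $1/(K_{a_*} a_*)$. This loop is resolved by first fixing an odd $A = A_0^k$ sufficiently large (with $k \geq k_0$ so that Lemma~\ref{L08} applies with $\delta = 1/4$), then setting $\alpha_* = 1/(c A \sqrt{n})$ for a suitable constant $c$, then reading off $a_*, K_{a_*}, N_{a_*}, C_1$, and finally taking $\eta = C_1/2$ and $N_0$ large in terms of $N_{a_*}$; the consistency inequality $A \geq 2/(K_{a_*} a_*)$ is verified because these constants depend mildly (polynomially) on $\alpha_*$. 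A secondary subtlety is that Lemma~\ref{L08} is stated for the axis-aligned hyperplane $\{x_n = 0\}$, while the slab in the hyperplane case may have arbitrary orientation; this is addressed by reinterpreting the subdivision of $Q$ in the rotated frame, using the rotation-invariance of \eqref{EQ06}--\eqref{EQ07}.
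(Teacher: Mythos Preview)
Your dichotomy between the simplex and hyperplane configurations is the right idea and matches the core of the paper's argument (their Claim~2). However, applying it directly at the scale of $Q$ has two genuine gaps that your parameter-balancing paragraph does not close.

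\textbf{Simplex case.} Lemma~\ref{LSimplex} carries two size constraints you do not verify: it requires $\diam S < C_0$, and its conclusion concerns $N(x_0, C_2 \diam S)$, which only contradicts $N(Q)$ if $C_2 \diam S \leq \diam Q$. Here $C_0$ and $C_2$ depend on $a_*=a(\alpha_*)$, and once you set $\alpha_* \sim 1/A$ these constants may degenerate (in particular $C_0$ can be very small and $C_2$ very large). Since $\diam S$ can be as large as $\diam Q = R\sqrt{n}$ with $R$ up to $1$, neither inequality follows from choosing $A$ large. The paper handles this by first iterating the partition to a level $j_0$ where the subcubes $q$ satisfy $\diam q < C_0$ and $C_2 \diam q < \diam Q$, and only then running the simplex/hyperplane dichotomy inside each such~$q$.

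\textbf{Hyperplane case.} Your rotation fix does not work as stated: after rotating $e$ to $e_n$, the cube $Q$ and its $A^n$ subcubes are no longer axis-aligned, so Lemma~\ref{L08} (which is about the axis-aligned subcubes meeting the equator $\{x_n=0\}$ of an axis-aligned cube) does not apply to them. The paper instead follows \cite[Theorem~5.1]{Lo1}: one replaces $q$ by a slightly larger cube having the given hyperplane as its equator. This enlargement requires room inside $Q$, which is again available only after descending to a deep enough level of the iteration.

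In short, the missing mechanism is the multi-scale iteration: the paper first passes to subcubes at level $j_0$ (so that the simplex-lemma size hypotheses and the cube-enlargement are available), proves the bound $\#\{q_i : N(q_i) > N(Q)/(1+\eta)\} < \tfrac{1}{2}A_0^{n-1}$ at each such step, and then propagates this back up using the combinatorial argument of \cite[Theorem~5.1]{Lo1}. Your single-scale argument cannot substitute for this.
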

\colb

\begin{proof}
Let $A_0 \geq 3$ be a multiple of $4n+1$ and $N_0>0$ a constant, both to be determined below.
We partition $Q$ inductively  into subcubes of equal size.
At each step, we divide $Q$ into $A_0^n$ subcubes of equal size.
Let $j\in \mathbb{N}$. 
The cube $Q$ is partitioned into $A_0^{nj}$ subcubes in the $j$-th step, and we denote all the subcubes by
\begin{align}
\{Q_{i_1,\ldots,i_j}: i_1,i_2,\ldots,i_j=1,2,\ldots, A_0^n\}.
\llabel{EQ86}
\end{align}
It is clear that $\diam Q_{i_1,\ldots,i_j}=\diam  Q/A_0^j$.
Let $\eta\in(0,1/10]$ be a constant to be determined below.
We say that a subcube $q$ is good if $N(q)\leq N(Q)/(1+\eta)$ and is bad if $N(q)> N(Q)/(1+\eta)$.
Fix $q = Q_{i_1, \ldots, i_j}$. Denote by $q_i$ the subcubes of $q$ in the $(j+1)$-th step, where $i=1,2,\ldots,A_0^n$.

\textit{Claim 1}:
All the bad subcubes $q_i \subset q$ are contained in the $2\diam q_i$ neighborhood of some hyperplane for sufficiently large $j\in \mathbb{N}$ and sufficiently small $\eta>0$. 
For the case when $q$ is good, i.e., $N(q)\leq N(Q)/(1+\eta)$, it follows that all the subcubes $q_i\subset q$ satisfy $N(q_i)\leq N(q)\leq N(Q)/(1+\eta)$  and the claim holds trivially. 
Now, we consider the case when $q$ is bad.
Denote
\begin{align}
	F 
	= 
	\left\{ x \in q : N(x,r) > \frac{N(Q)}{1+\eta} \text{ for some } 0 < r \leq \diam q_i=\frac{\diam q}{A_0} \right\}.
	\llabel{EQ87}
\end{align}
For the case when the relative width $\tilde{\omega}_q (F):=\mywidth F/\diam q=0$, $F$ lies between two arbitrarily close hyperplanes.
Hence, Claim~1 is proven.
We consider the case when $\tilde{\omega}_q (F) > 0$. 
Denote $\omega_0=1/A_0$.

\textit{Claim 2}:
There exist constants $N_{\omega_0},j_0, c_0>0$ depending on $\omega_0$
such that if $N(Q)>N_{\omega_0}$, $j\geq j_0$ and $\eta\in (0,c_0]$, then
\begin{align}
\tilde{\omega}_q (F)
	= 
\frac{\mywidth F}{\diam q}
<\omega_0.
\label{EQ88}
\end{align}

We postpone the proof of Claim~2 to Appendix~\ref{AppSimplex2}.
We shall take $A_0$ to depend on the constant from Lemma~\ref{L07} only, and thus $N_{\omega_0}$ is a constant. 
Let the parameter $N_0\geq N_{\omega_0}$ be as in the statement of Lemma~\ref{L10}.
Let
$\eta=\min \{1/10,c_0\}$ and $j\geq j_0$.
Using \eqref{EQ88}, we deduce that there exists a hyperplane $\mathcal{P}$ such that for any $x\in F$, we have
\begin{align}
\text{dist} (x, \mathcal{P})
< 
w_0 \cdot \diam q
.
	\llabel{EQ89}
\end{align} 
Let $q_i\subset q$ be a bad subcube. Then $N(x, r) > N(Q)/(1+\eta)$ for some $x \in q_i$ and $r \in (0, \diam q_i]$,
which implies $x\in F$.
For any $y\in q_i$, we have
\begin{align}
\text{dist}(y, \mathcal{P}) \leq |y - x| + 
\text{dist}(x, \mathcal{P}) 
<
\diam q_i + \omega_0 \cdot \diam q
= 
2 \diam q_i,
\label{EQ90}
\end{align}
since $\diam q_i = \diam q / A_0=\omega_0 \cdot\diam  q$. 
Therefore, Claim~1 is proven.
For the case when the hyperplane $\mathcal{P}$ is not the same as the equator hyperplane of $q$, we may adapt the arguments in \cite[Theorem~5.1]{Lo1} by choosing a bigger cube containing $q$ that has $\mathcal{P}$ as its equator hyperplane.
We omit the details here and assume that $\mathcal{P}$ is the equator hyperplane of~$q$.

Recall that $q=Q_{i_1,\ldots,i_j}$ (good or bad) is divided into $A_0^n$ subcubes $q_i$, where $i=1,\ldots,A_0^n$ and $A_0$ is a multiple of~$4n+1$.
This can be viewed as dividing $q$ into $(A_0/(4n+1))^n$ subcubes $\tilde{q}_k\subset q$ with $\diam \tilde{q}_k = (4n+1) \cdot\diam q_i$, where $k=1,2,\ldots, (A_0/(4n+1))^n$. 
From \eqref{EQ90}, we infer that all the bad subcubes $q_i$ are contained in $\tilde{q}_{k,0}$,
where $\tilde{q}_{k,0}$ are the subcubes that intersect $\{x_n=0\}$.
Consequently,
\begin{align}
(4n+1)^n
\cdot
\# 
\left\{
\tilde{q}_{k,0}: N(\tilde{q}_{k,0})>\frac{N(Q)}{1+\eta}
\right\}
\geq 
\# 
\left\{
q_i: N(q_i)>\frac{N(Q)}{1+\eta}
\right\}.
\label{EQ91}
\end{align}

Towards a contradiction, we assume that 
\begin{align}
\# 
\left\{
q_i: N(q_i)>\frac{N(Q)}{1+\eta}
\right\}
\geq \frac{1}{2} A_0^{n-1}.
\label{EQ92}
\end{align}
Let $\delta= 1/2 (4n+1)^n\in (0,1/10)$
and $\constanta, \constantn>0$ be the constants from Lemma~\ref{L07}.
From Lemma~\ref{L08}, we deduce that there exists a constant $k_0\in\NN$ such that if $A_0/(4n+1)= \constanta^{k_0}$ and $N(q) \geq 2 \constantn$, then
\begin{align}
\# \left\{ \tilde{q}_{k,0} : N(\tilde{q}_{k,0}) 
\geq 
\frac{N (q)}{2}
\right\} < 
\delta
\left(
\frac{A_0}{4n+1}
\right)^{n-1}
<\frac{A_0^{n-1}}{2 (4n+1)^n}.
\label{EQ93}
\end{align}
Let $N_0 =\max\{N_{\omega_0}, 4\constantn\}$ and $A_0= (4n+1) \constanta^{k_0}$.
For the case when $q$ is a bad subcube, we have
\begin{align}
N(q) > \frac{N(Q)}{1+\eta} 
\geq 
\frac{4\constantn}{1+\eta}
\geq 
2\constantn,
\llabel{EQ94}
\end{align}
since $\eta\in (0,1/10]$.
Therefore, the assumptions needed to obtain \eqref{EQ93} are satisfied and
we conclude \eqref{EQ93} holds when $q$ is a bad subcube.
From \eqref{EQ91}--\eqref{EQ93}, we arrive at
\begin{align}
	\# 
\left\{
\tilde{q}_{k,0}: N(\tilde{q}_{k,0})>\frac{N(Q)}{1+\eta}
\right\}
>
\# \left\{ 
\tilde{q}_{k,0} : N(\tilde{q}_{k,0}) 
\geq 
\frac{N (q)}{2}
\right\},
   \llabel{EQ95}
\end{align}
from where
\begin{align}
\frac{	N(q)}{2} 
\geq \frac{ N(Q) }{1+\eta} \geq 
\frac{10 N(Q)}{11}.
\llabel{EQ96}
\end{align}
This contradicts with $N(q) \leq N(Q)$, and thus \eqref{EQ92} does not hold for the case when $q$ is a bad subcube.
Clearly, \eqref{EQ92} does not hold for the case when $q$ is a good subcube.
Consequently, we obtain
\begin{align}
\# 
\left\{
q_i: N(q_i)>\frac{N(Q)}{1+\eta}
\right\}
< \frac{1}{2} A_0^{n-1},
\label{EQ97}
\end{align}
for every $q$ after the $j_0$-th step.
Using \eqref{EQ97} and similar arguments as in \cite[Theorem~5.1]{Lo1}, we conclude the proof of the lemma.
\end{proof}

\subsection{Upper bound on the nodal set}
The following lemma provides an
upper bound on the nodal set in terms of the doubling index.
\cole
\begin{Lemma}
\label{L11}
Let $u$ be a solution of \eqref{EQ06}--\eqref{EQ07}.
There exist constants $\alpha, C>0$ such that for any cube $Q\subset \Omega$ with side-length $S(Q)\leq 1$, we have
\begin{align}
	\mathcal{H}^{n-1}
	(\{u=0\}\cap Q)
	\leq
	C\diam^{n-1} (Q)
	N^{\alpha} (Q).
	\llabel{EQ98}
\end{align}
\end{Lemma}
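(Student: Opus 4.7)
The plan is to run a Logunov-style iteration driven by Lemma~\ref{L10}. Introduce the scale-invariant quantity
\begin{align*}
\Phi(N) = \sup \left\{ \frac{\mathcal{H}^{n-1}(\{u=0\} \cap Q)}{\diam^{n-1}(Q)} \;:\; Q\subset\Omega \text{ is a cube with } S(Q)\leq 1 \text{ and } N_u(Q)\leq N \right\},
\end{align*}
so that the lemma amounts to the polynomial growth bound $\Phi(N)\leq C(1+N)^{\alpha}$ for some $\alpha$ depending on the constants $\eta$ and $A$ furnished by Lemma~\ref{L10}. The conclusion then follows by applying this to the given cube $Q$.

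For the base case, when $N \leq N_0$ (with $N_0$ the threshold of Lemma~\ref{L10}), I would argue that Lemma~\ref{L02} forces the $L^2$-frequency, and hence the order of vanishing at every point of $Q$, to be uniformly bounded in terms of $N_0$. Combined with the classical Hardt-Simon-Han-Lin structure theory for nodal sets of elliptic equations with Lipschitz coefficients, together with a normal-families/compactness argument on the class $\{u : \|u\|_{L^2(Q)}=1,\, N_u(Q)\leq N_0\}$, this yields $\Phi(N_0) \leq C_0$ for some absolute constant $C_0$.

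For the inductive step with $N = N_u(Q) > N_0$, apply Lemma~\ref{L10} with the associated odd integer $A$ and constant $\eta>0$: partition $Q$ into $A^n$ subcubes $q$ of diameter $\diam Q / A$, at most $A^{n-1}/2$ of which are \emph{bad} in the sense that $N_u(q) > N/(1+\eta)$, while each of the remaining \emph{good} subcubes satisfies $N_u(q) \leq N/(1+\eta)$. Summing the nodal-set contributions gives
\begin{align*}
\mathcal{H}^{n-1}(\{u=0\} \cap Q)
\leq
\left(\frac{A^{n-1}}{2}\Phi(N) + A^n \Phi\!\left(\frac{N}{1+\eta}\right)\right)\left(\frac{\diam Q}{A}\right)^{n-1},
\end{align*}
and dividing by $\diam^{n-1}(Q)$ and taking the supremum over admissible $Q$ produces the key recursion $\Phi(N) \leq \tfrac{1}{2}\Phi(N) + A\,\Phi(N/(1+\eta))$, i.e., $\Phi(N)\leq 2A\,\Phi(N/(1+\eta))$. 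Iterating $k \asymp \log(N/N_0)/\log(1+\eta)$ times reduces to the base case and produces $\Phi(N) \leq C N^{\alpha}$ with $\alpha = \log(2A)/\log(1+\eta)$.

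The chief obstacle is the base case: proving $\Phi(N_0)<\infty$ in a uniform quantitative form when the coefficients $W, V$ lie only in $W^{1,\infty}$, and verifying that the Hardt-Simon type structure theory still applies in this rough setting. A related subtlety is that the recursion requires $\Phi(N)<\infty$ a~priori for every $N$; this can be bootstrapped by first establishing a crude (possibly exponential) bound from classical Carleman/frequency arguments, after which the contraction $\Phi(N)\leq \tfrac{1}{2}\Phi(N) + A\,\Phi(N/(1+\eta))$ upgrades it to the displayed polynomial bound.
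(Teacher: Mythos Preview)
Your proposal is correct and matches the paper's proof essentially line for line: the paper defines the same scale-invariant quantity (called $F(N)$ there), invokes Lemma~\ref{L10} on a near-extremal cube to derive the recursion $F(N)\leq 4A\,F(N/(1+\eta))$ (via a contradiction argument rather than your direct sup, which yields $4A$ instead of your $2A$), and iterates to the polynomial bound with $\alpha=\log(4A)/\log(1+\eta)$. For the a~priori finiteness of $F(N)$ that you flag as the chief obstacle, the paper simply cites \cite[Theorem~1.7]{HS}, so your instinct to rely on Hardt--Simon is exactly right and no separate compactness argument is needed.
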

\colb

\begin{proof}
For $N>0$, we define the function
\begin{align}
F(N)
:=
\sup \frac{\mathcal{H}^{n-1} (\{u=0\}\cap Q)}{\diam^{n-1} Q},
\llabel{EQ99}
\end{align}
where the supremum is taken over all cubes $Q\subset \Omega$ with $S(Q)\leq 1$ and all solution $u$ of \eqref{EQ06}--\eqref{EQ07} with $N (Q)\leq N$.
From \cite[Theorem~1.7]{HS}, it follows that $F(N)<\infty$ for any
$N>0$. 

Let $\eta,N_0,A>0$ be the constants from Lemma~\ref{L10}.
We claim that
\begin{align}
F(N)\leq 4A  
\cdot
F 
\left(
\frac{N}{1+\eta}
\right).
\label{EQ100}
\end{align}
for all $N\geq (1+\eta) N_0$.
For the sake of contradiction, we assume that there exists some $N\geq (1+\eta) N_0$ such that
\begin{align}
F(N)>4A \cdot F
\left(
\frac{N}{1+\eta}
\right).
\label{EQ101}
\end{align}
By definition of $F(N)$,
there exists a cube $Q\subset \Omega$  and a solution $u$ to \eqref{EQ06}--\eqref{EQ07} with $S(Q)\leq 1$ and $N(Q) \leq N$ such that
\begin{align}
\frac{\mathcal{H}^{n-1} (\{u=0\} \cap Q)}{\diam^{n-1} Q}
>\frac{3}{4} F(N).
\label{EQ102}
\end{align}
We partition $Q$ into $A^n$ subcubes $q_j$ of equal size, where $j=1,2,\ldots, A^n$. 
Denote the two groups by
\begin{align}
G_1=
\left\{
q_j: \frac{N}{1+\eta} <N (q_j) \leq N
\right
\}
\llabel{EQ103}
\end{align}
and
\begin{align}
G_2
=
\left\{
q_j: N (q_j) \leq \frac{N}{1+\eta}
\right
\}.
   \llabel{EQ104}
\end{align}
It is clear that $|G_1|+|G_2|=A^n$.
For the case when $N(Q)\geq N/ (1+\eta)$, we infer $N(Q)\geq N_0$ and thus from Lemma~\ref{L10} it follows that $|G_1|\leq 
\frac{1}{2} A^{n-1}$.
For the case when $N(Q)<N/(1+\eta)$, we have $N(q_j)\leq N(Q)< N/(1+\eta)$ for all $j=1,2,\ldots,A^{n}$ which implies that $|G_1|\leq \frac{1}{2} A^{n-1}$.
Hence, we conclude that $|G_1|\leq \frac{1}{2} A^{n-1}$.

By subadditivity of the Hausdorff measure and \eqref{EQ101}, we have
\begin{align}
\begin{split}
\mathcal{H}^{n-1}
(\{u=0\} \cap Q)
&\leq
\sum_{q_j\in G_1}	\mathcal{H}^{n-1}
(\{u=0\} \cap q_j)
+
	\sum_{q_j\in G_2}	\mathcal{H}^{n-1}
(\{u=0\} \cap q_j)
\\&
\leq
|G_1| \cdot
F(N) \cdot \diam^{n-1}q_j
+
	|G_2| \cdot
F\left(\frac{N}{1+\eta}\right) 
\cdot \diam^{n-1} q_j
\\&
\leq
\frac{1}{2} A^{n-1}
F(N) \cdot
\left(\frac{\diam Q}{A}
\right)^{n-1}
+
A^n
\cdot
\frac{	F(N)}{4A} \cdot 	\left(\frac{\diam Q}{A}
\right)^{n-1}
\\&
\leq
\frac{3}{4} F(N) \cdot \diam^{n-1} Q,
\llabel{EQ105}
\end{split}
\end{align}
contradicting with~\eqref{EQ102}.
Therefore, \eqref{EQ100} holds for all $N\geq (1+\eta) N_0 $, and it follows that
\begin{align}
F(N)\leq C N^\alpha
\comma
N>0,
\llabel{EQ106}
\end{align} 
where $C,\alpha>0$ are constants.
The proof of the lemma is thus completed.
\end{proof}

\section{Upper bound on the doubling index}
\label{sec05}
We recall the following three-ball inequality from~\cite[Proposition~2]{Da}.
\cole
\begin{Lemma}[\cite{Da}]
	\label{EQ107}
Let $u$ be a solution of \eqref{EQ06}--\eqref{EQ07} with $\Omega=B_R (0)$.
For $0<r_1<r_2<\frac{3}{2} r_2<r_3\leq R$, we have
\begin{align}
	H (x,r_2)
	\leq
	\left(
	\frac{r_3}{r_2}
	\right)^{CR^2}
	H (x,r_1)^{\gamma}
	H (x,r_3)^{1-\gamma},
	\llabel{EQ108}
\end{align}
for any $x\in B_R$, where
\begin{align}
	\gamma
	=
	\frac{\log r_3- \log \left(
		\frac{3r_2}{2}
		\right)
	}{\log r_3
	+\left(\frac{5e}{3}-1\right)
	\log \left(
	\frac{3r_2}{2}
	\right)
	-\frac{5e}{3} \log r_1
	}.
\llabel{EQ109}
\end{align}
\end{Lemma}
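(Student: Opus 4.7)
My plan is to derive this three-ball inequality from an almost-convexity property of $\log H(x, r)$ as a function of $\log r$, which follows from the almost-monotonicity of the frequency function already established in Lemma~\ref{Lmonotonenew}. After translating so that $x = 0$, identity \eqref{EQ11} gives
\[
\frac{d}{dr} \log H(r) = \frac{n}{r} + \frac{2 F(r)}{r} + \mathcal{E}(r), \qquad |\mathcal{E}(r)| \le C(r + 1),
\]
and, passing to the variable $s = \log r$ with $\phi(s) = \log H(e^s)$, one obtains $\phi'(s) = n + 2 F(e^s) + e^s \mathcal{E}(e^s)$. Since \eqref{EQ12} asserts that $(F(r) + \ccc r^2 + \ccc r^4) e^{2r^2 + 4r}$ is non-decreasing, the frequency $F(e^s)$ is, up to controlled additive corrections of size $O(R^2)$, non-decreasing in $s$; hence $\phi$ is convex in $s$ up to an error depending only on $R$.

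With almost-convexity in hand, I would apply the three-point inequality for convex functions at $(s_1, s_2, s_3) = (\log r_1, \log r_2, \log r_3)$, obtaining $\phi(s_2) \le \gamma \phi(s_1) + (1 - \gamma) \phi(s_3) + \mathcal{R}$, where $\mathcal{R}$ collects the integrated errors across $[\log r_1, \log r_3]$. Because $r \le R$, a direct check gives $\mathcal{R} \le C R^2 \log(r_3/r_2)$, which exponentiates to the prefactor $(r_3/r_2)^{CR^2}$ in the statement and yields the bound $H(r_2) \le (r_3/r_2)^{CR^2} H(r_1)^{\gamma} H(r_3)^{1-\gamma}$.

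The main obstacle is recovering the precise form of $\gamma$ quoted in the lemma, featuring the factor $5e/3$ and the intermediate point $3r_2/2$ in place of $r_2$. Standard Almgren-type monotonicity yields only $\gamma = \log(r_3/r_2)/\log(r_3/r_1)$; the sharper exponent here reflects Davey's use of a modified frequency function incorporating an exponential weight tailored to absorb the lower-order terms $V$ and $W$. A natural route to the stated $\gamma$ would be to define a modified frequency of the form $\tilde F(r) = F(r)\, e^{\Phi(r)}$ for a carefully chosen $\Phi$, establish its outright monotonicity, and then apply the three-point interpolation at the triple $(r_1, 3r_2/2, r_3)$; the constant $5e/3$ would emerge as an optimization constant from the specific weight. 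Since this inequality is used only as a black box in Section~\ref{sec05} and is quoted verbatim from \cite[Proposition~2]{Da}, the cleanest route is to invoke that result directly rather than reproduce the weight optimization.
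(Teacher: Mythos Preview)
Your final conclusion matches the paper exactly: the paper gives no proof of this lemma and simply cites \cite[Proposition~2]{Da}. So in that sense your proposal is correct and aligned with the paper's treatment.

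That said, the exploratory sketch you give---deriving almost-convexity of $\log H$ from Lemma~\ref{Lmonotonenew}---would not, as you yourself note, recover the stated $\gamma$. If you wanted to actually reproduce the argument, the right objects are not $F$ from Section~\ref{sec03} but the weighted quantities $h(x,r)=\int_{B_r(x)}|u|^2(r^2-|y|^2)^{\alpha-1}$, $d(x,r)=\int_{B_r(x)}|\nabla u|^2(r^2-|y|^2)^{\alpha}$, and $n=d/h$ with the specific choice $\alpha=2R^2$; these are precisely the tools the paper invokes in the proof of Lemma~\ref{Lfourball} (see \eqref{EQ111}--\eqref{EQ115}), and the constants $5e/3$ and the intermediate radius $3r_2/2$ come from the differential inequalities \eqref{EQ114}--\eqref{EQ115} for $h'/h$ together with the monotonicity of $\tilde n(r)=(n(r)+r^4/8\alpha)e^{r^2/4\alpha}$. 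Your vague suggestion of a modified frequency $\tilde F=Fe^{\Phi}$ is pointing in the right direction but at the wrong base object.
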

\colb

We adapt the proof of the three-ball inequality from \cite[Proposition~2]{Da} to establish the following four-ball inequality.
\cole
\begin{Lemma}
	\label{Lfourball}
Let $u$ be a solution of \eqref{EQ06}--\eqref{EQ07} with $\Omega=B_R (0)$.
For any $0<\rho_1<2\rho_1< \rho_2 <2\rho_2 <R$, we 
have 
\begin{align}
\begin{split}
	&
	\log
	\left(
	\frac{H(x,2\rho_1)}{H(x,\rho_1)}
	\right)
	\leq
	C
	+
	C
	\log
	\left(
	\frac{H(x,2\rho_2)}{H(x,\rho_2)}
	\right)
	+
	CR^2,
	\llabel{EQ110}
\end{split}
\end{align}
for any $x\in B_{R}$.
\end{Lemma}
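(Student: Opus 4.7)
The plan is to adapt the Carleman-estimate proof of the three-ball inequality (Lemma~\ref{EQ107}) from~\cite{Da} to the present four-ball setting. The three-ball inequality expresses an approximate log-convexity of $H(x,r)$ in $\log r$ with an $R^2$ correction; the four-ball inequality is the corresponding discrete-derivative statement, asserting that the doubling index $\log_2(H(x,2\rho_1)/H(x,\rho_1))$ is controlled by $\log_2(H(x,2\rho_2)/H(x,\rho_2))$ up to an additive $O(R^2)$ term.

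First, I would recall the Carleman estimate used in~\cite{Da}: for a suitable radial decreasing weight $\phi$ (such as $\phi(r)=-\log r$ plus a small quadratic correction) and $\tau$ sufficiently large, one has $\tau^3\int|v|^2 e^{2\tau\phi}\,dx\le C\int|\Delta v|^2 e^{2\tau\phi}\,dx$ for $v\in C_0^\infty(B_R\setminus\{0\})$. Applying this to $v=u\chi$ with $\chi$ a smooth radial cutoff, and using \eqref{EQ06}--\eqref{EQ07} together with Caccioppoli-type bounds to absorb the lower-order contributions (which produce an error of order $R^2$) into the main term, one reduces to an estimate whose right-hand side is controlled by integrals of $|u|^2$ on the shells where $\chi$ transitions.

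Next, I would choose $\chi$ so that $\chi\equiv 1$ on $\overline{B_{2\rho_2}\setminus B_{\rho_1}}$ and $\chi\equiv 0$ outside a slight enlargement of this annulus (e.g., outside $B_{9\rho_2/4}\setminus B_{\rho_1/2}$). The transition support then consists of two thin shells, one near $\partial B_{\rho_1}$ and one near $\partial B_{2\rho_2}$. Restricting the left-hand side of the Carleman estimate to the region $B_{2\rho_1}\setminus B_{\rho_1}$ and exploiting the monotonicity of $\phi$, I would arrive at
\begin{equation*}
\tau^3 e^{2\tau\phi(2\rho_1)}\bigl(H(x,2\rho_1)-H(x,\rho_1)\bigr)\le C e^{2\tau\phi(\rho_1/2)} H(x,\rho_1)+ C e^{2\tau\phi(2\rho_2)}\bigl(H(x,2\rho_2)-H(x,\rho_2)\bigr)+ CR^4.
\end{equation*}
Optimizing $\tau$ so that the inner boundary contribution is absorbed into the left, and then taking logarithms, would yield the desired four-ball inequality.

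The main obstacle is to ensure that the coefficient of $\log(H(x,2\rho_2)/H(x,\rho_2))$ after optimization is an absolute constant, free of any dependence on the ratio $\rho_2/\rho_1$. A naive application of Lemma~\ref{EQ107} would produce a $\log(\rho_2/\rho_1)$ factor that must be avoided; this requires a careful choice of $\phi$ and direct bookkeeping of the weight differences $\phi(\rho_1/2)-\phi(2\rho_1)$ and $\phi(2\rho_1)-\phi(2\rho_2)$, so that the threshold value of $\tau$ leads to a clean logarithmic relation rather than a scale-dependent one. This is where the four-ball argument genuinely differs from a direct three-ball application.
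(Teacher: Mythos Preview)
Your proposal diverges from the paper's argument in a basic way: the paper does \emph{not} use Carleman estimates. The reference~\cite{Da} is a frequency-function paper, and the proof of Lemma~\ref{Lfourball} adapts that machinery directly. Concretely, the paper introduces the weighted quantities
\[
h(x,r)=\int_{B_r(x)}|u|^2(r^2-|y|^2)^{\alpha-1}\,dy,\qquad
d(x,r)=\int_{B_r(x)}|\nabla u|^2(r^2-|y|^2)^{\alpha}\,dy,
\]
with $\alpha=2R^2$, and the modified frequency $\tilde n(x,r)=\bigl(d/h+\tfrac{r^4}{8\alpha}\bigr)e^{r^2/4\alpha}$, which is monotone in $r$ by~\cite[Corollary~2]{Da}. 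One then integrates the two-sided bound on $h'/h$ from~\cite{Da} over $[\rho_1,3\rho_1]$ (upper bound) and over $[3\rho_2/2,2\rho_2]$ (lower bound). Because both integrals are over intervals of \emph{fixed logarithmic length} and the monotonicity $\tilde n(3\rho_1)\le\tilde n(3\rho_2/2)$ links them, the resulting inequality relates $\log(h(3\rho_1)/h(\rho_1))$ to $\log(h(2\rho_2)/h(3\rho_2/2))$ with absolute constants---the ratio $\rho_2/\rho_1$ never enters. A simple comparison between $h$ and $H$ then finishes. The scale-independence you flag as the main obstacle is thus resolved not by tuning a Carleman parameter but by the monotonicity of $\tilde n$.

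Your Carleman outline has a concrete gap even before the optimization step. With the cutoff you describe ($\chi\equiv1$ on $B_{2\rho_2}\setminus B_{\rho_1}$, vanishing outside $B_{9\rho_2/4}\setminus B_{\rho_1/2}$), the outer commutator term lives on the shell $B_{9\rho_2/4}\setminus B_{2\rho_2}$, so the right-hand side should carry $H(x,9\rho_2/4)-H(x,2\rho_2)$, not $H(x,2\rho_2)-H(x,\rho_2)$ as you wrote. This is not a typo one can fix by relabeling: the Carleman boundary terms are annular integrals on the transition shells, and there is no mechanism in your scheme that produces the doubling quotient $H(2\rho_2)/H(\rho_2)$ on the right. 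Converting $H(9\rho_2/4)-H(2\rho_2)$ into $\log\bigl(H(2\rho_2)/H(\rho_2)\bigr)$ would itself require a doubling-type control at scale $\rho_2$, which is what you are trying to prove. So as written, the argument does not close; you would need an additional ingredient---effectively a frequency-monotonicity statement---to bridge this, at which point the Carleman detour becomes redundant and one is back to the paper's approach.
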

\colb
We emphasize that the constant $C$ is independent of~$R$.

\begin{proof}
	Let $x\in B_R$ and $r>0$. Let
	$\alpha\geq 2$.
	We define
	\begin{align}
		&\bhh (x,r)
		=
		\int_{B_r (x)}
		|u|^2 (r^2 - |y|^2)^{\alpha-1}\,dy,
		\label{EQ111}
		\\&
		\bdd (x,r)
		=\int_{B_r (x)} 
		|\nabla u|^2 
		(r^2 - |y|^2)^{\alpha}\,dy,
		\label{EQ112}
		\\&
		\bnn (x,r)=
		\frac{\bdd(x,r)}{\bhh (x,r)}
		\label{EQ113}
	\end{align}
(the weighted norms \eqref{EQ111}--\eqref{EQ112} and the frequency \eqref{EQ113} were introduced in \cite{Ku2}).
Using (4.19) and (4.20) in \cite[Proposition~2]{Da}, we infer that
	\begin{align}
		\frac{\bhh'(r)}{\bhh (r
			)}
		\geq
		\frac{2\alpha+n-2}{r}
		+
		\frac{3 \left(\bnn (r)+\frac{r^4 }{8\alpha}
			\right)}{4\alpha r}
		-
		\frac{3r^3}{32\alpha^2} 
		-
		\frac{2r}{\alpha} 
		\label{EQ114}
	\end{align}
	and
	\begin{align}
		\frac{\bhh'(r)}{\bhh (r
			)}
		\leq
		\frac{2\alpha+n-2}{r}
		+
		\frac{5 \left(\bnn (r)+\frac{r^4 }{8\alpha}
			\right)
		}{4\alpha r}
		+
		\frac{2 r}{\alpha}.
		\label{EQ115}
	\end{align}
Here and below, for simplicity of notation, we omit the $x$-dependence.
Note that \eqref{EQ114} and \eqref{EQ115} hold for any $\alpha\geq 2$.
	Let $\alpha= 2 R^2$.
Define 
\begin{align}
	\tilde{\bnn} (x,r)
	= 
	\left(\bnn (x,r) +\frac{r^4}{8\alpha}
	\right) 
	e^{\frac{r^2}{4\alpha}}.
	\llabel{EQ116}
\end{align}
 Using the monotonicity of $\tilde{\bnn} (x,r)$ in $r>0$ from \cite[Corollary~2]{Da}, we deduce that
	\begin{align}
		\bnn (r)
		+
		\frac{r^4}{8\alpha}
		=
		\tilde{\bnn} (r)
		e^{-r^2/4\alpha}
		\geq
		\tilde{\bnn} 
		(3\rho_2 /2)
		e^{-1}
		\comma
		r\in[3\rho_2/2,2\rho_2]
		\label{EQ117}
	\end{align}
	and
	\begin{align}
		\bnn (r)
		+
		\frac{r^4}{8\alpha}
		=\tilde{\bnn} (r) e^{-r^2/4\alpha}
		\leq 
		\tilde{\bnn} (3\rho_1)
		\comma
		r\in [\rho_1, 3 \rho_1].
				\label{EQ118}
	\end{align}
	We integrate \eqref{EQ114} in $r$ from $3\rho_2/2$ to $2\rho_2$ and appeal to \eqref{EQ117}, yielding
	\begin{align}
		\begin{split}
			&
			\log
			\left(
			\frac{\bhh (2\rho_2
				)}{\bhh(3 \rho_2/2)}
			\right)
			=
			\int_{3 \rho_2/2}^{2\rho_2}
			\frac{\bhh'(r)}{\bhh (r)}\,dr
			\geq
			\int_{3 \rho_2/2}^{\rho_2}
			\left(\frac{2\alpha+n-2}{r}
			+
			\frac{3 \left(\bnn (r)+\frac{r^4 }{8\alpha}\right)}{4\alpha r}
			-
			\frac{3 r^3}{32\alpha^2} 
			-
			\frac{2 r}{\alpha} \right)\,dr
			\\&\quad
			\geq
			\frac{3
				\left(
				2\alpha+n-2 + 
			\frac{	\tilde{\bnn} (3\rho_2/2)}{\alpha} \right)
		}{4 e}
			\log
			\left(
			\frac{2\rho_2}{3\rho_2/2}
			\right)
			-
		C
			.
			\llabel{EQ119}
		\end{split}
	\end{align}
	Similarly, we integrate \eqref{EQ115} in $r$ from $\rho_1$ to $3 \rho_1$ and use \eqref{EQ118} to get
\begin{align}
\begin{split}
	&\log
	\left(
	\frac{\bhh (3\rho_1
		)}{\bhh(\rho_1)}
	\right)
	=
	\int_{\rho_1}^{3\rho_1}
	\frac{\bhh'(r)}{\bhh (r)}\,dr
	\leq	\int_{\rho_1}^{3\rho_1}
	\left(\frac{2\alpha+n-2}{r}
	+
	\frac{5 \left(\bnn (r)
		+
		\frac{r^4}{8\alpha}
		\right)}{4\alpha r}
	+
	\frac{2r}{\alpha} \right)\,dr
	\\&\quad\quad
	\leq
	\frac{5}{4}
	\left(
	2\alpha+n-2 
	+ 
	\frac{\tilde{\bnn} (3\rho_1) }{\alpha}
	\right)
	\log
	\left(
	\frac{3\rho_1}{\rho_1}
	\right)
	+
	C
	.
			\llabel{EQ120}
\end{split}
\end{align}
Since $\tilde{\bnn} (r)$ is a non-decreasing function of $r$ and $\rho_2\geq 2\rho_1$, we infer that $\tilde{\bnn} (3\rho_2/2)\geq \tilde{\bnn} (3\rho_1)$.
Combining the above two inequalities, we arrive at
\begin{align}
\begin{split}
&
\frac{\log \left(
	\frac{\bhh (3 \rho_1)}{\bhh (\rho_1
		)}
	\right)
	-C	
}{\log 3
}
\leq
\frac{5}{4} 
\left(
2\alpha+n-2
+
\frac{\tilde{\bnn} (3 \rho_1)}{\alpha}
\right)
\leq
\frac{
	\log \left(
	\frac{\bhh (2\rho_2)}{\bhh (3 \rho_2/2)}
	\right)
	+C
}{\log (4/3)}
\cdot
\frac{5e}{3}
,
   \llabel{EQ121}
\end{split}
\end{align}
which leads to
\begin{align}
\begin{split}
\log
\left(
\frac{\bhh (3 \rho_1)}{\bhh (\rho_1)}
\right)	
\leq
C
+
C
\log
\left(
\frac{\bhh (2 \rho_2)}{\bhh (3 \rho_2/2)}
\right).
\label{EQ122}
\end{split}
\end{align}

For any $r\in (0,R/2]$,
it is easy to check that
\begin{align}
\bhh (x,r)
\leq
r^{2(\alpha-1)}
\int_{B_r (x)}
|u|^2
=r^{2(\alpha-1)}
H(x,r)
\label{EQ123}
\end{align}
and
\begin{align}
	\bhh (x,3 r/2)
	\geq
	\left(\frac{5r^2}{4} \right)^{\alpha-1}
	\int_{B_{r} (x)}
	|u|^2
	=
	\left(\frac{5}{4} \right)^{\alpha-1}
	r^{2(\alpha-1)}
	H(x,r).
	\label{EQ124}
\end{align}
Using \eqref{EQ122}--\eqref{EQ124}, we conclude that
\begin{align}
\begin{split}
	&
	\log
	\left(
	\frac{H(x,2\rho_1)}{H(x,\rho_1)}
	\right)
	\leq
	C
	+
	C
	\log
	\left(
	\frac{H(x,2\rho_2)}{H(x,\rho_2)}
	\right)
	+
	C R^2,
	\llabel{EQ125}
\end{split}
\end{align}
completing the proof of the lemma.
\end{proof}

The following lemma provides an upper bound on the doubling index.
\cole
\begin{Lemma}
	\label{Lnodal}
Let $u$ be a solution of \eqref{EQ06}--\eqref{EQ07} with $\Omega=B_R (0)$.
Suppose that there exists a constant $K\geq 1$ such that $\Vert u\Vert_{L^2 (B_R (0))} \leq  K\Vert u\Vert_{L^2 (B_{R/2} (0))}$.
There exists a constant $C\ge1$ such that
\begin{align}
	\log
	\left(
	\frac{H(x,2r)}{H(x,r)}
	\right)
	\leq
	C(R^2 +\log K),
	\llabel{EQ127}
\end{align}
for any $x\in B_{R/2}$ and $r\in (0, R/8]$.
\end{Lemma}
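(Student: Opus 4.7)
My plan is to apply the four-ball inequality from Lemma~\ref{Lfourball} once to reduce the local doubling estimate at scale $r$ to a doubling estimate at a larger scale $\rho_2$ of order $R$, and then to bound the latter using the global hypothesis $H(0,R)\leq K^2 H(0,R/2)$ together with the three-ball inequality of Lemma~\ref{EQ107}. Given $x\in B_{R/2}$ and $r\in (0,R/8]$, I would choose $\rho_2$ in the interval $(2r,(R-|x|)/2)$, which is non-empty since $(R-|x|)/2>R/4\geq 2r$; for concreteness one may take $\rho_2=r+(R-|x|)/4$. Lemma~\ref{Lfourball} then yields
$$\log\!\left(\frac{H(x,2r)}{H(x,r)}\right)\leq C+CR^2+C\log\!\left(\frac{H(x,2\rho_2)}{H(x,\rho_2)}\right),$$
and because this choice of $\rho_2$ forces $B_{2\rho_2}(x)\subset B_R(0)$, the doubling hypothesis gives the upper bound $H(x,2\rho_2)\leq H(0,R)\leq K^2 H(0,R/2)$. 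It therefore suffices to establish a matching lower bound of the form $H(x,\rho_2)\geq e^{-C(R^2+\log K)}H(0,R/2)$.

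For this lower bound I would use a propagation-of-smallness argument. Applying Lemma~\ref{EQ107} with center $0$ and radii $s,R/2,R$, and then using the hypothesis to control $H(0,R)$ by $K^2H(0,R/2)$, a straightforward rearrangement yields
$$H(0,s)\geq c\!\left(\frac{s}{R}\right)^{C(R^2+\log K)}H(0,R/2),\qquad s\in(0,R/2],$$
so that $H(0,s)\geq e^{-C(R^2+\log K)}H(0,R/2)$ whenever $s$ is a fixed fraction of $R$. When $|x|$ is not too close to $R/2$, say $|x|<R/3$, one has $\rho_2>|x|$ and $B_{\rho_2-|x|}(0)\subset B_{\rho_2}(x)$, so the previous bound transfers directly to $H(x,\rho_2)$. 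For $|x|\in[R/3,R/2)$ I would propagate along a chain of points $0=y_0,y_1,\ldots,y_m=x$ with each step $|y_{j+1}-y_j|$ a fixed fraction of $R$ and $m$ bounded independently of $x$; at each step, the inclusion $H(y_{j+1},s+|y_{j+1}-y_j|)\geq H(y_j,s)$ combined with Lemma~\ref{EQ107} at $y_{j+1}$ transfers the lower bound to $y_{j+1}$ at a slightly smaller scale, losing only a factor $e^{-C(R^2+\log K)}$ per step.

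The hard part will be this chain propagation in the regime $|x|\in[R/3,R/2)$. Since Lemma~\ref{Lfourball} forces $\rho_2<(R-|x|)/2$, one necessarily has $\rho_2<|x|$ whenever $|x|>R/3$, so the ball $B_{\rho_2}(x)$ no longer contains the origin and the direct inclusion argument breaks down. The chain repairs this, but the delicate technical point is to verify that at every intermediate $y_j$ the radii fall within the admissible range $r_1<r_2<\tfrac{3}{2}r_2<r_3\leq R$ required by the three-ball inequality, and that the accumulated constants from the finitely many steps depend polynomially on $K$ and at most exponentially on $R^2$, matching the target bound $C(R^2+\log K)$.
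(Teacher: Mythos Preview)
Your approach is essentially the same as the paper's---both combine the four-ball inequality (Lemma~\ref{Lfourball}) with a chain-of-balls application of the three-ball inequality (Lemma~\ref{EQ107})---but the paper reverses the order, which makes the argument noticeably cleaner. The paper first runs the chain argument at the \emph{fixed} scale $R/4$: by pigeonhole some ball $B_{R/4}(x_0)\subset B_{R/2}$ carries mass $\geq H(0,R/2)/C$, and propagating from $x_0$ to $x$ along overlapping balls yields $\log\bigl(H(x,R/2)/H(x,R/4)\bigr)\leq C(R^2+\log K)$ for every $x\in B_{R/2}$. Only then does it invoke Lemma~\ref{Lfourball} with the fixed choice $\rho_2=R/4$. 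Working at a fixed scale avoids the case analysis and the delicate tracking of admissible radii along the chain that you flag as ``the hard part.''

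One small correction: with your choice $\rho_2=r+(R-|x|)/4$, the claim ``$\rho_2>|x|$ whenever $|x|<R/3$'' is not quite right. The inequality $r+(R-|x|)/4>|x|$ rearranges to $r>(5|x|-R)/4$, and at $|x|=R/3$ the right side equals $R/6>R/8\geq r$. The direct-inclusion case actually needs $|x|<R/5$ (so that $(5|x|-R)/4<0$), not $|x|<R/3$. This does not affect your overall strategy, since the chain argument covers all $x$ anyway, but it does mean the regime requiring chaining is a bit larger than you stated.
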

\colb
We emphasize that the constant $C$ is independent of $R$ and~$K$.

\begin{proof}
	Without loss of generality, we assume that $\Vert u\Vert_{L^2 (B_R (0))}=1$. 
	There exists a finite open cover
	\begin{align}
		B_{R/2} (0)
		\subset
		\bigcup_{i\in I}
		B_{R/4} (x_i),
		\llabel{EQ126}
	\end{align}
where $|I|\leq C$ and $x_i\in B_{R/2}$.
Hence, there exists a ball $B_{R/4} (x_0)$ such that
\begin{align}
	\Vert u\Vert_{L^2 (B_{R/4} (x_0))}
	\geq 
	\frac{\Vert u\Vert_{L^2 (B_{R/2})}}{C}
	\geq
	\frac{1}{CK},		\llabel{EQ128}
\end{align}
where $x_0\in B_{R/2}$.
Let $x\in B_{R/2}$. We may connect $x_0$ to $x$ by a chain of overlapping balls and invoke Lemma~\ref{EQ107} to obtain
\begin{align}
	\log
	\left(
	\frac{H(x,R/2)}{H(x,R/4)}
	\right)
	\leq
	C (R^2+\log K).
	\llabel{EQ129}
\end{align}
The lemma then follows by appealing to Lemma~\ref{Lfourball}.
\end{proof}

\begin{proof}[Proof of Theorem~\ref{T01}]
Let $w$ be a solution of \eqref{EQ03}--\eqref{EQ04} in a neighborhood of $\bar{B_2}  \subset \mathbb{R}^{n}$.
Let
\begin{align}
	u (x) 
	=
	w\left(
	\frac{x}{M}
	\right),
	\quad
	\tilde{W} (x)= 
	\frac{1}{M}
	W\left(
	\frac{x}{M}
	\right)
	,\quad
	\tilde{V}(x)
	= 
	\frac{1}{M^2}
	V\left( 
	\frac{x}{M}
	\right).
	\llabel{EQ130}
\end{align}
It is readily checked that $u$ satisfies
\begin{align}
	\Delta u
	=\tilde{W} \cdot \nabla u
	+
	\tilde{V} u
	\inin{B_{2M}},
	\llabel{EQ131}
\end{align}
where the coefficients $\tilde{W}$ and $\tilde{V}$ satisfy
\begin{align}
	\Vert \tilde{W}\Vert_{W^{1,\infty} (B_{2M})}
	+
	\Vert
	\tilde{V}\Vert_{W^{1,\infty} (B_{2M})}
	\leq 1.
   \llabel{EQ132}
\end{align}
From \eqref{EQ04}, it follows that
\begin{align}
	\Vert u\Vert_{L^2 (B_{2M})}
	\leq
	e^{\kappa}
	\Vert u\Vert_{L^2 (B_{M})}.
	\label{EQ133}
\end{align}
For any $y\in B_1$, we have
\begin{align}
	\begin{split}
		&
		\mathcal{H}^{n-1}
		(\{x\in B_{M^{-1}} (y): w(x)=0\}
		)
		=
		\mathcal{H}^{n-1}
		(\{x\in B_{M^{-1}} (y): u(Mx)=0\}
		)
		\\&\indeq
		=
		\mathcal{H}^{n-1}
		\left(
		M^{-1} \{x\in B_{1} (My): u (x)=0\}
		\right)
		\\&\indeq
		\leq
		M^{-(n-1)}
		\mathcal{H}^{n-1}
		\left(
		\{x\in B_1 (My): u (x)=0\}
		\right).
		\label{EQ134}
	\end{split}
\end{align}
Let $Q\subset B_{M}$ be a cube with $\diam Q =1/8$.
Using \eqref{EQ133}, as well as Lemmas~\ref{L11} and \ref{Lnodal}, we get
\begin{align}
	\mathcal{H}^{n-1}
	(\{u=0\}\cap Q)
	\leq
	C
	N_u^{\alpha} (Q)
	\leq
	C(M^2 +\kappa)^C.
	\llabel{EQ135}
\end{align}
Since we may cover $B_1$ by a finite number of cubes with diameter $1/8$, we deduce that
\begin{align}
	\mathcal{H}^{n-1}
	(
	\{x\in B_1 (My):u(x)=0\}
	)
	\leq
	C(M^2+\kappa)^C
	\comma
	y\in B_1.
	\label{EQ136}
\end{align}
There exists an open cover
\begin{align}
	B_1\subset
	\bigcup_{j\in J}
	B_{M^{-1}} (y_j),
	\label{EQ137}
\end{align}
where $|J|\leq CM^n$ and $y_j\in B_1$.
Using the subadditivity and \eqref{EQ134}--\eqref{EQ137}, we arrive at
\begin{align}
\begin{split}
	&
		\mathcal{H}^{n-1}
	(\{x\in B_1:w(x)=0\})
	\leq
	\sum_{j\in J}
	\mathcal{H}^{n-1}
	(\{x\in B_{M^{-1}} (y_j): w(x)=0\})
	\\&\quad
	\leq
	M^{-(n-1)}
	\sum_{j\in J}
	\mathcal{H}^{n-1}
	(\{x\in B_1 (My_j):u(x)=0\})
	\\&\quad
	\leq
	CM 
	(M^2+\kappa)^C,
	\llabel{EQ138}
\end{split}
\end{align}
completing the proof of the theorem.
\end{proof}

\appendix
\section{Auxiliary lemmas}
\subsection{Proof of Lemma~\ref{LSimplex}}
\label{AppSimplex}
We recall the following Euclidean geometry lemma from \cite[Section~2, page 225]{Lo1}:
for $\alpha>0$, 
there exist constants $C_{\alpha}>0$ and $K_{\alpha}
\geq 2/\alpha$ such that 
\begin{align}
	B(x_0, (1+C_\alpha) K_\alpha \diam S)
	\subset
	\bigcup_{j=1}^{n+1} B(x_j, K_\alpha \diam S).
	\label{EQ150}
\end{align}
We emphasize that the constants $C_\alpha, K_\alpha>0$ are independent of the simplex~$S$.
Set $r=K_\alpha\diam S$
and $M=\max_{j= 1,2,\ldots,n+1} H(x_j,r)$.
Then $M=H(x_i,r)$
for some $i\in \{1,2,\ldots, n+1\}$. 
From~\eqref{EQ150}, it follows that
\begin{align}
	H(x_0, (1+C_\alpha) r)
	\leq 
	\sum_{j=1}^{n+1}
	H(x_j, r)
	\leq
	(n+1)
	M.
	\label{EQ151}
\end{align}
Let $t\in [2 (1+C_\alpha),-\log (1-\eps/2)/24\ccc r]$ and $\eps\in (0,1/10]$ be constants to be determined below, where $\ccc\geq 1$ is the constant from Lemma~\ref{Lmonotonenew}; in particular, we shall take $\eps$ to depend on $t$ and $C_\alpha$.
Let
\begin{align}
	C_0
	\in 
	\left(0,\min 
	\left\{\frac{-\log (1-\eps/2)}{24}, \frac{-\log (1-\eps/2)}{48 \ccc K_\alpha
		 (1+C_\alpha)} 
	 \right\}
	 \right).
	 \llabel{EQ200}
\end{align}
%
%
Since $r=K_\alpha \diam S < K_\alpha C_0$, we have
\begin{align}
	\frac{	-\log (1-\eps/2)}{\ccc r (1+C_\alpha)}
	\geq 
	\frac{-\log (1-\eps/2)}{\ccc K_\alpha
	C_0 (1+C_\alpha)} 
	> 
	48.
	\label{EQ152}
\end{align}
Note that $r_j \leq K_\alpha \diam S/2=r/2$, where $j=1,\ldots,n+1$.
Denote $N=\min_{j=1,\ldots,n+1}N(x_j,r_j)$.
From \eqref{EQ152} and Lemma~\ref{L02}, it follows that 
\begin{align}
	N(x_i, r) 
	\geq 
	(1-\eps)^3 N(x_i,r_i)-C
	\geq
	(1-\eps)^3 N-C.
	\llabel{EQ153}
\end{align}
Hence, from Lemma~\ref{L04}, we obtain
\begin{align}
	H(x_i, tr)
	\geq	
	t^{(1-\epsilon)^2 N(x_i, r) -C}
	H(x_i,r)
	\geq
	t^{(1-\epsilon)^5 N -C}
	H(x_i,r)
	\geq 
	M t^{(1-\epsilon)^6 N },
	\label{EQ154}
\end{align}
where the last step follows
by taking $N_\alpha>0$ sufficiently large depending on~$\eps$.
Set $\delta=1/K_\alpha t =\diam S /t r$.
By the triangle inequality, we have
\begin{align}
	\bigcup_{j=1}^{n+1}
	B(x_j, tr)
	\subset 
	B\left(x_0,  \left(1+\delta
	\right) 
	tr\right),
   \llabel{EQ155}
\end{align}
which leads to
\begin{align}
	H(x_i, tr)
	\leq
	H(x_0, (1+\delta) tr)
	.
	\label{EQ156}
\end{align}
Note that \eqref{EQ152} implies $(1+C_\alpha) r\in (0, -\log (1-\eps/2) /24 \ccc)$.
It is readily checked that
\begin{align}
	(1+\delta)tr
	=tr+\diam S
	<
	-
	\frac{\log (1-\eps/2)}{24\ccc}
	-
	\frac{\log (1-\eps/2)}{24\ccc}
	=
	\frac{-\log (1-\eps/2)}{12\ccc}
   \llabel{EQ157}
\end{align}
and
\begin{align} 	
	 \frac{(1+\delta)t}{(1+C_\alpha)}
	\geq 2.
   \llabel{EQ158}
\end{align}
Using Lemma~\ref{L04}, we arrive at 
\begin{align}
	\begin{split}
			\log\left(
		\frac{H(x_0, (1+\delta) tr)}{H(x_0, (1+C_\alpha)r)}
		\right)
		\leq
	\left(	(1-\eps)^{-2}
		N(x_0, (1+\delta)tr)
		+
		C
		\right)
		\log
		\left(
		\frac{(1+\delta)t}{1+C_\alpha}
		\right)
		.
		\label{EQ159}
	\end{split}
\end{align}
From \eqref{EQ151} and \eqref{EQ154}--\eqref{EQ156}, it follows that
\begin{align}
	\begin{split}
		&
		\log\left(
		\frac{H(x_0, (1+\delta) tr)}{H(x_0, (1+C_\alpha)r)}
		\right)
		\geq
		\log
		\left(
		\frac{H(x_i,tr)}{H(x_0, (1+C_\alpha)r)}
		\right)
		\geq
		\log
		\left(
		\frac{M t^{(1-\eps)^6 N}}{(n+1)M}
		\right)
		\geq 
		(1-\eps)^7 N \log t,
		\label{EQ160}
	\end{split}
\end{align}
where the last step follows 
by taking $N_\alpha>0$ sufficiently large depending on $\eps$.

Let $t =\max\{2/K_\alpha C_\alpha, 2C_\alpha+2\}$.
Note that $t\in [2(1+C_\alpha), -\log (1-\eps)/24 \ccc K_\alpha \diam S]$ by taking $C_0>0$ sufficiently small.
Then $\delta=1/K_\alpha t\leq C_\alpha/2$.
Combining \eqref{EQ159} and \eqref{EQ160}, we obtain
\begin{align}
	\begin{split}
	&
	N(x_0, (1+\delta) tr) 
	\geq
	\frac{(1-\eps)^9 N \log t}{\log \left(
		\frac{(1+\delta)t}{1+C_\alpha}
		\right)}
	-C (1-\eps)^2
	\geq 
	\frac{(1-\eps)^{9}
	N}{1- \log \left(
	\frac{1+C_\alpha}{1+\delta}
	\right)/\log t}
	-C
	\\&\quad
	\geq
	\frac{(1-\eps)^{9} N}{1- \log \left(
	\frac{1+C_\alpha}{1+C_\alpha/2}
	\right)/\log t}
	-C.
	\llabel{EQ161}
	\end{split}
\end{align}
Let $N_\alpha>0$ be sufficiently large and $\eps \in (0,1/10]$ be sufficiently small depending on $C_\alpha$ and $t$, we obtain
\begin{align}
	N(x_0, (1+\delta)tr)
	> (1+C_1) N,
   \llabel{EQ162}
\end{align}
where $C_1>0$ is a constant.
Let $C_2=1+K_\alpha t>0$.
It follows that
\begin{align}
	N(x_0, C_2 \diam S)
	=
	N(x_0, (1+\delta) tr)
	> 
	(1+C_1) N,
	\llabel{EQ163}
\end{align}
completing the proof of the lemma.
\hfill $\square$

\subsection{Proof of Claim~2 in Lemma~\ref{L10}}
\label{AppSimplex2}
To obtain a contradiction, we
assume that $\tilde{\omega}_q (F)\geq \omega_0$. From Lemma~\ref{LG01}, it follows that there exists a constant $a=a(\omega_0)>0$ and a simplex $S$ with vertices $x_1,x_2,\ldots, x_{n+1}\in F$ such that
$\omega (S)>a$ and $\diam S>a\cdot \diam q$.
Hence,
for each $j=1,2,\ldots,n+1$, there exist $r_j\in (0, \diam q/A_0]$ such that
\begin{align}
	N(x_j, r_j)
	>
	\frac{N(Q)}{1+\eta}.
	\label{EQ164}
\end{align}
For this $a>0$, Lemma~\ref{LSimplex} implies that there exist constants $C_0, C_1, C_2, N_a>0$ and $K_a\geq 2/a$ depending on $a$ such that if
\begin{enumerate}
	\item $\omega (S) >a$ and $\diam S<C_0$,
	\item there exist $r_j\in (0,K_a\diam S/2]$, where $j=1,2,\ldots,n+1$, with
	\begin{align}
		\min_{j=1,\ldots,n+1} N(x_j, r_j) 
		\geq 
		N_a,
		\label{EQ165}
	\end{align}
\end{enumerate}
then
\begin{align}
	N (x_0, C_2 \diam S) 
	>(1+C_1) \min_{j=1,\ldots,n+1} N(x_j, r_j).
	\label{EQ166}
\end{align}
In fact, using a slight modification of the proof as in \cite[Lemma~4.1]{Lo1}, we can prove that
for any constant $N>0$, if the assumption \eqref{EQ165} is replaced by
\begin{align}
	\min_{j=1,\ldots,n+1}N(x_j,r_j)\geq 
	N
	\geq 
	N_a,
	\llabel{EQ167}
\end{align}
then the conclusion \eqref{EQ166} can be replaced by
\begin{align}
	N(x_0, C_2 \diam S)
	> 
	(1+C_1)N.
	\label{EQ168}
\end{align}
Let $N_{\omega_0}=  2 N_a$ and $c_0 =\min \{1,C_1\}$.
Denote $N:=N(Q)/(1+\eta)$.
Using \eqref{EQ164}, we obtain
\begin{align}
	N(x_j, r_j)
	\geq
	N
	>
	\frac{N(Q)}{2}
	\geq 
	\frac{2 N_a}{2}
	=
	N_a
	\comma
	j=1,2,\ldots,n+1.
		\label{EQ169}
\end{align}
It is clear that
\begin{align}
	r_j \leq 
	\diam q < \frac{\diam S }{a} 
	\leq 
	\frac{K_a  \diam S}{2}.
		\label{EQ170}
\end{align}
We choose $j_0>0$  sufficiently large depending on $C_0$ so that $3^{j_0} >\sqrt{n}/C_0$. 
It follows that for any $j\geq j_0$, we have
\begin{align}
	\diam S
	\leq 
	\diam q
	\leq 
	\frac{\sqrt{n}}{A_0^{j_0}}
	\leq
	\frac{\sqrt{n}}{3^{j_0}}<
	C_0.
	\label{EQ171}
\end{align}
From \eqref{EQ169}--\eqref{EQ171}, we infer that the assumptions of Lemma~\ref{LSimplex} are satisfied.
Therefore, we use \eqref{EQ168} to conclude that
\begin{align}
	N(x_0, C_2 \diam S) >
	(1+C_1) N
	=
	(1+C_1)\cdot \frac{N (Q)}{1+\eta} 
	.
   \llabel{EQ172}
\end{align}
We take $j_0 >0$ sufficiently large so that $3^{j_0 }> C_2$. 
Then, for any $j\geq j_0$, we have
\begin{align}
	C_2 \diam S
	\leq 
	C_2 \diam q
	= 
 \frac{C_2 \diam Q}{A_0^j} 
	\leq  \frac{C_2 \diam Q}{3^{j_0}}
	\leq
	\diam  Q,
   \llabel{EQ173}
\end{align}	
which leads to
\begin{align}
	N(Q)\geq N(x_0, C_2\diam S)
	>
	\frac{(1+C_1) N(Q)}{1+\eta}
	>
	N(Q),
	\llabel{EQ174}
\end{align}
where the last step follows since $\eta\in (0,c_0)$. 
Hence, we arrive at a contradiction.
\hfill
$\square$

\section*{Acknowledgments}
\rm
IK was supported in part by the NSF grant DMS-2205493.

\ifnum\sketches=1

\fi


\begin{thebibliography}{[ARRV]}



\bibitem[Al]{Al}
F.~J. Almgren Jr., 
\emph{Dirichlet's problem for multiple valued functions and the regularity of mass minimizing integral currents}, in {\it Minimal submanifolds and geodesics (Proc. Japan-United States Sem., Tokyo, 1977)}, pp. 1--6, North-Holland, Amsterdam-New York.


	
\bibitem[Ar]{Ar}
N.~Aronszajn, 
\emph{A unique continuation theorem for solutions of elliptic partial differential equations or inequalities of second order},
J. Math. Pures Appl.~(9)~\textbf{36} (1957), 235–249.



\bibitem[ABG]{ABG}
W.~O. Amrein, A.~M. Boutet~de~Monvel and V. Georgescu, \emph{$L\sp{p}$-inequalities for the Laplacian and unique continuation}, 
Ann. Inst. Fourier (Grenoble)~{\bf 31} (1981), no.~3, {\rm vii}, 153--168.


\bibitem[AKS]{AKS}
N. Aronszajn, A. Krzywicki and J. Szarski,
\emph{A unique continuation theorem for exterior differential forms on Riemannian manifolds},
Ark. Mat.~{\bf 4} (1962), 417--453 (1962).




\bibitem[ARRV]{ARRV}
G.~Alessandrini, L.~Rondi, E.~Rosset, and S.~Vessella, 
\emph{The stability for the Cauchy problem for elliptic equations}, 
Inverse Problems~{\bf 25} (2009), no.~12, 123004, 47 pp.


\bibitem[BG]{BG}
A. Banerjee and N. Garofalo, \emph{Quantitative uniqueness for elliptic equations at the boundary of $C^{1,{\rm Dini}}$ domains},
J. Differential Equations~{\bf 261} (2016), no.~12, 6718--6757.


\bibitem[Car]{Car}
T.~Carleman,
\emph{Sur un probl\`eme d'unicit\'e{} pur les syst\`emes d'\'equations aux d\'eriv\'ees partielles \`a{} deux variables ind\'ependantes},
Ark. Mat. Astr. Fys.~{\bf 26} (1939), no.~17, 9 pp.



\bibitem[Da]{Da}
B.~Davey,
\emph{A frequency function approach to quantitative unique continuation for elliptic equations}, 
arxiv 2506.19130.


\bibitem[Do]{Do}
R.~T.~Dong,
\emph{Nodal sets of eigenfunctions on Riemann surfaces}, 
J.~Differ.~Geom.~\textbf{36}, 493--506 (1992).


\bibitem[DF1]{DF1} 
H.~Donnelly and C.~Fefferman,
\emph{Nodal sets of eigenfunctions on {R}iemannian manifolds},
Invent. Math.~\textbf{93} (1988), no.~1, 161--183.


\bibitem[DF2]{DF2} 
H.~Donnelly and C.~Fefferman, \emph{Nodal sets for eigenfunctions of the Laplacian on surfaces},
J.~Amer. Math. Soc.~\textbf{3} (1990), no.~2,
333--353.




\bibitem[DZ]{DZ}
B.~Davey and J.~Zhu, 
\emph{Quantitative uniqueness of solutions to second-order elliptic equations with singular lower order terms}, 
Comm. Partial Differential Equations~{\bf 44} (2019), no.~11, 1217--1251.


\bibitem[GL]{GL}
N. Garofalo and F. Lin, \emph{Monotonicity properties of variational integrals, $A_p$ weights and unique continuation}, 
Indiana Univ. Math. J.~{\bf 35} (1986), no.~2, 245--268.	


\bibitem[HS]{HS}
R.~Hardt and L.~Simon,
\emph{Nodal sets for solutions of elliptic equations},
J. Differential Geom.~{\bf 30} (1989), no.~2, 505--522.


\bibitem[JK]{JK}
D.~S. Jerison and C.~E. Kenig, 
\emph{Unique continuation and absence of positive eigenvalues for Schr\"odinger operators}, 
Ann. of Math. (2)~{\bf 121} (1985), no.~3, 463--494.


\bibitem[Ku1]{Ku1}
I.~Kukavica,
\emph{Quantitative uniqueness for second-order elliptic operators}, 
Duke Math. J.~{\bf 91} (1998), no.~2, 225--240


\bibitem[Ku2]{Ku2}
I.~Kukavica,
\emph{Quantitative, uniqueness, and vortex degree estimates for solutions of the Ginzburg-Landau equation}, Electronic Journal of Differential Equations,      Vol.~\textbf{2000}(2000), No. 61, pp. 1--15.

\bibitem[KN]{KN}
I.~Kukavica and K.~Nystr\"{o}m,
\emph{Unique continuation on the boundary for {D}ini domains},
Proc. Amer. Math. Soc.~{\bf 126} (1998), 441--446.

\bibitem[KZZ]{KZZ}
C.~E. Kenig, J. Zhu and J. Zhuge, 
\emph{Doubling inequalities and nodal sets in periodic elliptic homogenization}, 
Comm. Partial Differential Equations~{\bf 47} (2022), no.~3, 549--584.


\bibitem[Lin]{Lin}
F.~Lin,
\emph{Nodal sets of solutions of elliptic and parabolic equations}, 
Comm. Pure Appl. Math.~{\bf 44} (1991), no.~3, 287--308.



\bibitem[Lo1]{Lo1} 
A.~Logunov, \emph{Nodal sets of {L}aplace eigenfunctions: polynomial
	upper estimates of the {H}ausdorff measure}, Ann. of Math. (2)~\textbf{187}
(2018), no.~1, 221--239.

\bibitem[Lo2]{Lo2} 
A.~Logunov,
\emph{Nodal sets of Laplace eigenfunctions: proof of {N}adirashvili's conjecture and of the lower bound in Yau's conjecture},
Ann. of Math. (2)~\textbf{187} (2018), no.~1, 241--262.

\bibitem[LoM1]{LoM1} 
A.~Logunov and E.~Malinnikova,
\emph{Nodal sets of Laplace eigenfunctions: estimates of the Hausdorff measure in dimensions two and three},
in {\it 50 years with Hardy spaces}, 333--344, Oper. Theory Adv. Appl., 261, Birkh\"auser/Springer, Cham.


\bibitem[LoM2]{LoM2} 
A.~Logunov and E.~Malinnikova,
\emph{Review of {Y}au's conjecture on zero sets of {L}aplace
	eigenfunctions}.
\emph{Current developments in mathematics 2018}, 179--212. Int. Press, Somerville, MA, 2020. 


\bibitem[LMNN]{LMNN}
A. Logunov, E.~Malinnikova, N.~Nadirashvili, and F.~Nazarov,
\emph{The sharp upper bound for the area of the nodal sets of Dirichlet Laplace eigenfunctions}, 
Geom. Funct. Anal.~{\bf 31} (2021), no.~5, 1219--1244.


\bibitem[LS]{LS}
F. Lin and Z. Shen, 
\emph{Nodal sets and doubling conditions in elliptic homogenization}, 
Acta Math. Sin. (Engl. Ser.)~{\bf 35} (2019), no.~6, 815--831.



\bibitem[LTY]{LTY}
F. Liu, L. Tian and X.~P. Yang,
\emph{Measure upper bounds of nodal sets of Robin eigenfunctions}, Math. Z.~{\bf 306} (2024), no.~1, Paper No. 14, 14 pp.



\bibitem[LZ]{LZ} 
F.~Lin and J.~Zhu,
\emph{Upper bounds of nodal sets for eigenfunctions of eigenvalue problems},
Math. Ann.~{\bf 382} (2022), no.~3-4, 1957--1984.


\bibitem[NV]{NV}
A. Naber and D. Valtorta, 
\emph{Volume estimates on the critical sets of solutions to elliptic PDEs}, 
Comm. Pure Appl. Math.~{\bf 70} (2017), no.~10, 1835--1897.


\bibitem[SS]{SS}
M.~Schechter and B.~Simon,
\emph{Unique continuation for Schr\"odinger operators with unbounded potentials},
J. Math. Anal. Appl.~{\bf 77} (1980), no.~2, 482--492.


\bibitem[Y]{Y} 
S.T.~Yau (ed.),
\emph{Seminar on {D}ifferential {G}eometry},
Annals of Mathematics Studies, vol. No. 102, Princeton University Press, Princeton, NJ;
University of Tokyo Press, Tokyo, 1982, Papers presented at seminars held during the academic year 1979--1980.


\bibitem[Z1]{Z1}
J. Zhu, 
\emph{Quantitative uniqueness of elliptic equations}, 
Amer. J. Math. {\bf 138} (2016), no.~3, 733--762.


\bibitem[Z2]{Z2}
J. Zhu, 
\emph{Doubling inequality and nodal sets for solutions of bi-Laplace equations}, 
Arch. Ration. Mech. Anal.~{\bf 232} (2019), no.~3, 1543--1595.


\bibitem[ZZ]{ZZ}
J.~Zhu and J.~Zhuge,
\emph{Nodal sets of Dirichlet eigenfunctions in quasiconvex Lipschitz domains},
arxiv 2303.02046.
\end{thebibliography}
\end{document}